\theoremstyle{definition}
\newtheorem{notation}[theorem]{Notation}
\newcommand{\uu}{\textbf{u}}
\newcommand{\ch}{\mathrm{ch}}
\newcommand{\Ind}{\mathrm{Ind}}
\newcommand{\Sym}{\mathfrak{S}}
\newcommand{\NDPF}{\mathrm{PF}^{\uparrow}}
\newcommand{\NDDPF}{\mathrm{DPF}^\uparrow}
\newcommand{\PF}{\mathrm{PF}}
\newcommand{\DPF}{\mathrm{DPF}}
\newcommand{\sort}{\mathrm{sort}}
\newcommand{\Krew}{\mathrm{Krew}}
\newcommand{\runs}{\mathrm{runs}}
\newcommand{\dip}{\mathrm{dip}}
\newcommand{\Park}{{{\sf Park}}}
\newcommand{\DPark}{{{\sf DPark}}}
\newcommand{\x}{\mathbf{x}}
\newcommand{\y}{\mathbf{y}}
\newcommand{\z}{\mathbf{z}}
\newcommand{\w}{\mathbf{w}}
\newcommand{\de}{\mathrm{dft}}
\newcommand{\pde}{\mathrm{pdft}}
\newcommand{\NN}{\mathbb{N}}
\newcommand{\CC}{\mathbb{C}}
\newcommand{\bij}{\longleftrightarrow}
\newcommand{\SYT}{\mathrm{SYT}}
\newcommand{\pairset}{decrement pair set}
\newcommand{\setM}{fixed pair set}
\newcommand{\Cat}{{\mathcal{C}}}
\begin{document}

\title{The Defective Parking Space and Defective Kreweras Numbers}

\author{%
Rebecca E. Garcia,
Pamela E. Harris,
Alex Moon,
Aaron Ortiz,
Lauren J. Quesada,
Cynthia Marie Rivera S\'anchez,
Dwight Anderson Williams II
and
Alexander N.~Wilson
}

\classification{05A15, 05A19}
\keywords{Parking function, Young tableau, Dyck path, Catalan number}

\begin{abstract}
A 
defective $(m,n)$-parking function with defect $d$ is a parking function with $m$ cars attempting to park on a street with $n$ parking spots in which exactly $d$ cars fail to park. 
We establish a way to compute the defect of a defective $(m,n)$-parking function and show that the defect of a parking function is invariant under the action of $\mathfrak{S}_m$, the symmetric group on $[m]=\{1,2,\ldots,m\}$. 
We introduce the defective parking space $\DPark_{m,n}$ spanned by defective parking functions and describe its Frobenius characteristic as an $\mathfrak{S}_m$ representation graded by defect via coefficients $\Krew_{d,n}(\lambda)$ called defective Kreweras numbers. We provide a conjectured formula for $\Krew_{d,n}(\lambda)$ for sufficiently large $n$.
We also show that the set of nondecreasing defective $(m,n)$-parking functions with defect $d$ are in bijection with the set of standard Young tableaux of shape $(n + d, m - d)$.  
This implies that the number of $\mathfrak{S}_m$-orbits of defective $(m,n)$-parking functions with defect $d$ is given by 
$\frac{n-m+2d+1}{n+d+1}\binom{m+n}{n+d}$. 
We also give a multinomial formula for the size of an $\mathfrak{S}_m$-orbit of a nondecreasing $(m,n)$-parking function with defect $d$. We conclude by using these results to give a new formula for the number of defective parking functions.
\end{abstract}
\maketitle

\section{Introduction}

Given $m,n\in\mathbb{N}\coloneqq\{1,2,3,\ldots\}$ with $m\leq n$, we recall the classical combinatorial description of $(m,n)$-parking functions.
Consider $m$ cars in queue to enter a linear parking lot with $n$ sequentially numbered parking spaces. 
A tuple $\x=(x_1,x_2,\ldots,x_m)\in[n]^m$, where $[n]=\{1,2,\ldots,n\}$, is called a \emph{preference list} and it encodes that, for each $i\in[m]$, car $i$ prefers parking spot $x_i\in[n]$. Given a parking list $\x$, we refer to an entry as a \emph{parking preference}. The following procedure for parking is what we refer to as the \emph{classical parking scheme}.
Cars enter the street one at a time, proceed to their preferred parking spot, and park there if their preference is unoccupied. Otherwise, the car parks in the first space available after their preference. 
If no such spot is available, then we say that car has failed to park.
If the preference list $\x$ allows all cars to park, then $\x$ is called an $(m,n)$-parking function. 
Throughout we let $\PF_{m,n}$ denote the set of $(m,n)$-parking functions of length $m$, and we recall that when $n=m$, Konheim and Weiss showed $|\PF_{n,n}|=(n+1)^{n-1}$, while Cameron, Johannsen, Prellberg, and Schweitzer showed $|\PF_{m,n}|=(n+1-m)(n+1)^{m-1}$, see \cites{defective,Konheim_Weiss} for proofs.

Parking functions have connections to many areas of mathematics including representation theory, Coxeter groups, hyperplane arrangements, discrete geometry, the quicksort algorithm, the Tower of Hanoi game, cooperative game theory, and many others \cites{ Hanoi,unit_perm,Boolean_intervals,parking_games,haimanConjecturesQuotientRing1994,quicksort}.
Moreover, many have studied 
generalizations of parking functions allowing for cars to move backwards when finding their preferred parking spot occupied or bumping cars out of an occupied parking spot, cars parking on a street with some parking spots unavailable, allowing cars to have various lengths and colors, and cars having multiple parking spot preferences \cites{joinsplit,ell_interval_pfs,unit_pf,assortments1,backwards,knaples,sequences,vacillating,assortments2,MVP}.
There has also been substantial interest in the study of
statistics of parking functions. This includes statistics such as the number of lucky cars (those cars that park in their preference), displacement (the difference between where a car parks and its preference), and more classical statistics such as ascents, descents, ties, peaks and valleys, and flatness \cites{pf_stats,FlatPF,GesselSeo}. 

In our work, we study \textit{defective parking functions} as introduced by Cameron, Johannsen, Prellberg, and Schweitzer in  \cite{defective}. 
Classically, a defective parking function of defect $d$ is a preference list in $[n]^m$ in which exactly $d$ cars fail to park. 
We generalize this to think of cars parking on an infinitely long road, where defect $d$ is the difference between the true parking spot of the final car and $n$ whenever there are cars failing to park. Here a car ``fails to park" implies the car did not park in the first $n$ spots so that $d$ is nonnegative and only parking functions yield zero defect. 
We also allow preference lists to come from $[n+1]^m$ to include defective parking functions where no cars park in the $n$ parking spots.
In this context, a parking function is a preference list with zero defect. 
If $d \in \mathbb{Z}_{\geq0}$, we let $\DPF_{m,n,d}$ denote the set of all defective parking functions with defect $d$, and as usual $|\DPF_{m,n,d}|$ denotes the cardinality of the set.
We refer the reader to \cites{defective,butler2017parking} 
for the enumeration of defective parking functions.

Our first contribution provides a quick way to determine the defect of a  preference list.

\begin{definition}\label{def:dft}
    The defect of a parking preference $\x \in [n+1]^m$ is denoted $\de(\x)$.
\end{definition}

\begin{theorem}\label{thm:defect_characterization}
    For $m,n\in\mathbb{N}$, let $\x\in[n+1]^m$ and let $\x'=(x_1',x_2',\ldots,x_m')$ be the rearrangement of $\x$ into nondecreasing order. Then $\x$ is a defective parking function with defect $d$ if and only if \[\max_{j\in[m]}(x'_j-j+(m-n),0)=d.\]
\end{theorem}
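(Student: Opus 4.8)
The plan is to reduce the statement to computing a single quantity: the largest spot occupied when $\x$ is parked on the infinite road. Since the road is infinite every car parks, and I claim the defect is exactly $\max(M-n,0)$, where $M$ denotes the maximal occupied spot (the ``true parking spot of the final car,'' i.e.\ the car parking furthest down the road). First I would justify this reading of the definition. Because every preference lies in $[n+1]$, any car that parks in a spot $s\ge n+1$ has preference $\le n+1\le s$, so at the moment it parks all of $n+1,\ldots,s$ are already occupied; hence the occupied spots exceeding $n$ are exactly $\{n+1,\ldots,M\}$, there are $M-n$ of them, and this count of cars that fail to park agrees with $M-n$. Thus $\de(\x)=\max(M-n,0)$, and it remains to prove the key identity $M=\max_{j\in[m]}(x_j'+m-j)$.

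I would prove this identity by two inequalities, working directly with the parking process of $\x$, so that no separate order-invariance lemma is needed; the sorted values $x_j'$ enter only as statistics of the multiset of preferences. For the lower bound, fix $j\in[m]$ and consider the $m-j+1$ cars occupying sorted positions $j,j+1,\ldots,m$, each of which has preference at least $x_j'$. Each such car parks in a distinct spot that is at least its own preference, hence at least $x_j'$, so these $m-j+1$ distinct spots all lie in $\{x_j',x_j'+1,\ldots\}$; the largest of them is therefore at least $x_j'+(m-j+1)-1=x_j'+m-j$, giving $M\ge x_j'+m-j$.

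For the upper bound I would use a maximal-run argument, which I expect to be the technical heart of the proof. Let $a$ be the smallest spot such that $a,a+1,\ldots,M$ are all occupied. Since spot $a-1$ is empty at the end (and was never occupied, as cars never leave), any car with preference $\le a-1$ parks at a spot $\le a-1$; consequently every one of the $t:=M-a+1$ cars parked in $[a,M]$ has preference $\ge a$. Thus at least $t$ cars have preference $\ge a$, which forces the $t$-th largest preference to satisfy $x_{m-t+1}'\ge a$. Writing $j=m-t+1=m-M+a$, and noting $1\le t\le m$ so that $j\in[m]$, this rearranges to $x_j'+m-j\ge M$, whence $M\le\max_i(x_i'+m-i)$. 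Combining the two bounds yields $M=\max_{j\in[m]}(x_j'+m-j)$.

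Finally I would substitute and simplify: since $\max\big(\max_j b_j,\,0\big)=\max_j\max(b_j,0)$, I obtain $\de(\x)=\max(M-n,0)=\max_{j\in[m]}\big(x_j'-j+(m-n),\,0\big)$, which is exactly the claimed characterization, and $\x$ has defect $d$ precisely when this value equals $d$. The main obstacle is the upper bound: one must argue that the top block of occupied spots is fed only by sufficiently large preferences, and this is exactly where the permanent emptiness of the boundary spot $a-1$ is essential.
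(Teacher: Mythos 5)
Your proposal is correct, and it takes a genuinely different route from the paper. The paper splits the theorem into two pieces: a lemma for nondecreasing lists (Lemma~\ref{prop:defect2}), proved via a ``wasted spots'' recursion $w_{k+1}=\max\{x_{k+1}-(k+1),w_k\}$ that tracks the parking position of every car, and a separate order-invariance step, which is not proved from scratch but imported from Cameron et al.'s characterization of $\DPF_{m,n,d}$ as a difference of two $\uu$-parking-function sets (a statement depending only on the multiset of preferences). You instead run the parking process on $\x$ in its given order and prove the single identity $M=\max_{j}(x'_j+m-j)$ for the largest occupied spot by two inequalities: a pigeonhole lower bound using the $m-j+1$ cars with the largest preferences, and an upper bound from the maximal occupied run $[a,M]$, where the permanent emptiness of spot $a-1$ forces all $M-a+1$ cars in that run to have preference at least $a$. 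Your preliminary reduction $\de(\x)=\max(M-n,0)$ — via the observation that the occupied spots beyond $n$ form the contiguous block $\{n+1,\ldots,M\}$ because every preference is at most $n+1$ — is also sound and worth keeping explicit, since it reconciles the ``cars that fail to park'' and ``overflow past spot $n$'' readings of defect. The trade-off: the paper's recursion yields the position of every car and is reused as the standalone Lemma~\ref{prop:defect2}, but it only applies to sorted lists and leans on an external citation for $\Sym_m$-invariance; your argument is elementary, self-contained, handles arbitrary orderings in one pass, and delivers the $\Sym_m$-invariance of defect as an immediate corollary of the fact that your formula depends only on the sorted values.
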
 

A common representation of parking functions is by labeled Dyck paths consisting of north and east steps: For a parking function $\y$ with weakly increasing rearrangement $\x$, the number of east steps before the $i$th north step is $x_i - 1$. Here $\y$ is determined by labeling the north steps such that columns are labeled in increasing order, and letting the corresponding permutation (read bottom-to-top) be the permutation of $\x$ that gives $\y$. For more on labeled Dyck paths, we refer the interested reader to \cite{adeniran-pudwell} and \cite{willigenburg-shuffle}. Theorem~\ref{thm:defect_characterization} allows for a similar characterization of defective parking functions using labeled lattice paths with a ``dip'' statistic, which we discuss more in Section~\ref{sec:rep theory}. The dip statistic corresponds to what we call predefect, which we introduce formally in \Cref{def:predefect}. 

\begin{example}
    Consider $\y = (9,3,5,6,10,9,5) \in \DPF_{7,9,2}$, which is a rearrangement of the defective parking function \[\x = (3,5,5,6,9,9,10) \in \NDDPF_{7,9,2}.\] 
    Note that $\y$ has defect $2$, by Theorem~\ref{thm:defect_characterization}, and $\x$ and $\y$ can be represented as labeled lattice paths as illustrated in Figure~\ref{fig:labeled-paths}.
    \begin{figure}[h!]\begin{subfigure}{0.5\linewidth}\begin{tikzpicture}[scale=0.5]
        \draw[step=1,gray,thin] (0,0) grid (9,7);
        \draw[red, thick] (2,0) -- (9,7);
        \draw[blue,ultra thick] (0,0) -- (2,0) -- (2,1) -- (4,1) -- (4,3) -- (5,3) -- (5,4) -- (8,4) -- (8,6) -- (9,6) -- (9,7);
        \node at (2.5, 0.5) {1};
        \node at (4.5, 1.5) {2};
        \node at (4.5, 2.5) {3};
        \node at (5.5, 3.5) {4};
        \node at (8.5, 4.5) {5};
        \node at (8.5, 5.5) {6};
        \node at (9.5, 6.5) {7};
    \end{tikzpicture}\caption{The labeled lattice path representation of $\x = (3,5,5,6,9,9,10) \in \NDDPF_{7,9,2}$.}\end{subfigure}
    \begin{subfigure}{0.5\linewidth}\begin{tikzpicture}[scale=0.5]
        \draw[step=1,gray,thin] (0,0) grid (9,7);
        \draw[red, thick] (2,0) -- (9,7);
        \draw[blue,ultra thick] (0,0) -- (2,0) -- (2,1) -- (4,1) -- (4,3) -- (5,3) -- (5,4) -- (8,4) -- (8,6) -- (9,6) -- (9,7);
        \node at (2.5, 0.5) {2};
        \node at (4.5, 1.5) {3};
        \node at (4.5, 2.5) {7};
        \node at (5.5, 3.5) {4};
        \node at (8.5, 4.5) {1};
        \node at (8.5, 5.5) {6};
        \node at (9.5, 6.5) {5};
    \end{tikzpicture}\caption{The labeled lattice path representation of $\y = (9,3,5,6,10,9,5) \in \DPF_{7,9,2}$.}\end{subfigure}\caption{Two labeled lattice paths. Defect is given by dip, the maximum distance of the lattice path below the diagonal red line.}\label{fig:labeled-paths} \end{figure}
\end{example}

\begin{remark}
    In the study of parking functions, most authors add the restriction that there not be more cars than spots --- indeed if $m > n$ then some amount of cars fail to park. In our case, however, we have no such restriction. When there are more cars than spots, we simply observe that defect $d$ is positive and no less than $m - n$. 
\end{remark}

We recall that an important feature of parking functions is that they are invariant under the action of $\mathfrak{S}_m$, the symmetric group on $[m]$, see \cite[\S 4 Example 1]{Francon}. 
Namely, given $\alpha=(a_1,a_2,\ldots,a_m)\in\PF_{m,n}$ and any $\pi\in\mathfrak{S}_m$, the tuple
\[\pi\cdot\alpha=(a_{\pi^{-1}(1)},a_{\pi^{-1}(2)},\ldots,a_{\pi^{-1}(m)})\in\PF_{m,n}.\]
It is a well-documented result (see \cite{ArmstrongCatalan} for a proof) that the $\mathfrak{S}_n$-orbits of $\PF_{n,n}$
are enumerated by the Catalan numbers, which are defined by \cite[\href{https://oeis.org/A000108}{A000108}]{OEIS}: \[\Cat_n=\frac{1}{n+1}\binom{2n}{n}.\]
There are multiple ways to consider parking functions with unequal number of cars and spots. Armstrong, Loehr, and Warrington~\cite{armstrong} consider the case where 
$m$ and $n$ are coprime and in which case a car takes up a fraction of a spot.
In this paper, we allow for $m$ and $n$ to be any positive integers and for each car to occupy exactly one spot. 
Our first result establishes that the defect of a parking function is invariant under the action of $\mathfrak{S}_m$, although this fact is already implicit in \cite{defective}. 
This invariance of defect allows us to consider $\DPark_{m,n}$, the permutation representation of $\mathfrak{S}_m$ acting on defective parking functions graded by defect. 
We compute the graded Frobenius characteristic for $\DPark_{m,n}$ in terms of \emph{defective Kreweras numbers} $\Krew_{d,n}(\lambda)$, which we formally introduce in \Cref{sec:rep theory}.

\begin{theorem}
    The graded Frobenius characteristic $\ch_t(\DPark_{m,n})$ is given by \begin{align*}
    \ch_t(\DPark_{m,n})&\hspace{-1mm}=\hspace{-1mm}\sum_{\lambda\vdash m}\hspace{-1mm}\left(\sum_{i=0}^{m-n}\hspace{-1mm}\Krew_{i,n}(\lambda)+\hspace{-3mm}\sum_{d>m-n}\hspace{-3mm}\Krew_{d-(m-n),n}(\lambda)t^d\hspace{-1mm}\right)\hspace{-1mm}h_\lambda,
    \end{align*}
    where $h_{\lambda}$ denotes a complete homogeneous symmetric function.
    In particular, when $n=m$, \begin{align*}
        \ch_t(\DPark_{n,n})&=\sum_{\lambda\vdash n}\sum_{d\geq 0}\Krew_{d,n}(\lambda)t^dh_\lambda.
    \end{align*}
\end{theorem}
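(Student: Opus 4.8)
The plan is to recognize $\ch_t(\DPark_{m,n})$ as the graded Frobenius characteristic of a permutation module and to read its expansion in the $h_\lambda$-basis directly off the orbit structure of $\mathfrak{S}_m$ acting on defective parking functions. First I would invoke the standard principle for permutation modules: if $\mathfrak{S}_m$ acts on a finite set $X$ by permuting coordinates, then $\mathbb{C}[X]$ decomposes as a direct sum over $\mathfrak{S}_m$-orbits, and the Frobenius characteristic of the transitive module supported on a single orbit $\Out$ is $h_{\mu(\Out)}$, where $\mu(\Out)$ is the partition recording the stabilizer of a point as a Young subgroup. For the action on preference lists in $[n+1]^m$, the stabilizer of a list $\x$ is exactly the subgroup permuting coordinates that carry a common value, i.e.\ the Young subgroup $\mathfrak{S}_\mu$ whose block sizes are the multiplicities of the distinct entries of $\x$; hence $\mu(\Out)$ is the content partition of any representative. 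This gives $\ch(\DPark_{m,n})=\sum_{\Out}h_{\mu(\Out)}$, summed over orbits of defective parking functions.

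Next I would use the invariance of defect under $\mathfrak{S}_m$ (established earlier in the paper) to conclude that $\de$ is constant on each orbit, so the module is genuinely graded by defect and its graded characteristic is obtained by weighting each orbit by $t^{\de(\Out)}$, namely $\ch_t(\DPark_{m,n})=\sum_{\Out}t^{\de(\Out)}h_{\mu(\Out)}$. Selecting from each orbit its unique nondecreasing representative $\x'\in\NDDPF$ and grouping by content partition $\lambda\vdash m$, the coefficient of $h_\lambda$ becomes $\sum_{\x'}t^{\de(\x')}$, the generating function by defect of the nondecreasing defective parking functions of content $\lambda$.

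The crux is to convert this defect grading into the predefect statistic that the numbers $\Krew_{d,n}(\lambda)$ enumerate. By \Cref{thm:defect_characterization} we have $\de(\x')=\max(p+(m-n),0)$, where $p:=\max_j(x'_j-j)\ge 0$ is the (nonnegative) predefect quantity that $\Krew_{d,n}(\lambda)$, as fixed in \Cref{sec:rep theory}, counts among content-$\lambda$ representatives. The truncation $\max(\cdot,0)$ merges every predefect whose shifted value $p+(m-n)$ is nonpositive into the single defect $0$, so these representatives contribute a single $t$-free cumulative sum of $\Krew$-values; the remaining predefects are shifted injectively, $p\mapsto p+(m-n)=d$, so a representative of predefect $d-(m-n)$ contributes $\Krew_{d-(m-n),n}(\lambda)$ to the coefficient of $t^{d}$. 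Assembling the two contributions and summing over $\lambda\vdash m$ produces exactly the bracketed coefficient displayed in the statement.

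Finally, the specialization $n=m$ is immediate: the shift $m-n$ vanishes and, since $p\ge 0$, no truncation occurs, so $\de(\x')=p$ and the collapsing cumulative term degenerates to the single value $\Krew_{0,n}(\lambda)$; the coefficient of $h_\lambda$ reduces to $\sum_{d\ge 0}\Krew_{d,n}(\lambda)t^{d}$, as claimed. The main obstacle is the bookkeeping of the third step: one must match this collapse-and-shift description against the precise normalization of $\Krew_{d,n}(\lambda)$ set in \Cref{sec:rep theory} in order to pin down the exact summation ranges in the bracket, in particular to identify which initial segment of predefect values is absorbed into defect $0$. One should also confirm at the outset that the nondecreasing representative faithfully indexes orbits and that its content partition is the block-size partition of the stabilizing Young subgroup, so that the passage from orbits to the $h_\lambda$-basis, and then to the $\Krew$-counts, is justified.
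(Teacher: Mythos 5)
Your proposal is correct and follows essentially the same route as the paper: the paper's Lemma~\ref{lem:fixed_content_frob} is exactly your orbit/Young-subgroup computation (orbits of $[n+1]^m$ are indexed by content), and the remainder is the same collapse-and-shift bookkeeping converting $\de(\x)=\max(\pde(\x)+(m-n),0)$ into the $\Krew_{d,n}(\lambda)$ coefficients. Your identification of the absorbed initial segment of predefect values as $0\le p\le n-m$ is the right resolution of the bookkeeping you flag; it matches the form $\sum_{d=0}^{n-m}\Krew_{d,n}(\lambda)$ used in the body of the paper rather than the $\sum_{i=0}^{m-n}$ appearing in the displayed statement (the two agree only when $m=n$).
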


We then establish the following bijection.

\begin{theorem}\label{thm:general}
The set of nondecreasing defective parking functions of defect $d \in \mathbb{Z}_{\geq0}$, denoted $\DPF_{m,n,d}^{\uparrow},$ is in bijection with the set of standard Young tableaux of shape $(n+d,m-d)$.
\end{theorem}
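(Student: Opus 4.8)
The plan is to realize the bijection through the Robinson--Schensted--Knuth (RSK) correspondence applied to a two-letter word, using Theorem~\ref{thm:defect_characterization} to convert the defect into a shape statistic. First I would encode a nondecreasing $\x\in\NDDPF_{m,n,d}$ as a lattice-path word $w=w(\x)$ of length $m+n$ over the ordered alphabet $\{a<b\}$: reading the (unlabeled) path of Figure~\ref{fig:labeled-paths} from $(0,0)$ to $(n,m)$, I record an $a$ for each east step and a $b$ for each north step, padding with east steps so that the path always ends at $(n,m)$. This is the standard bijection between nondecreasing tuples in $[n+1]^m$ and words with exactly $n$ copies of $a$ and $m$ copies of $b$, of which there are $\binom{m+n}{m}$. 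I then apply RSK to $w$ and output the recording tableau $Q$.

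The next step is to see that $\x\mapsto Q$ is a bijection onto two-row standard Young tableaux with $m+n$ cells. Since the content of $w$ is fixed (namely $a^n b^m$), the insertion tableau $P$ has at most two rows and is the \emph{unique} semistandard tableau of its shape with that content: the second row must consist entirely of $b$'s, and the first row must read $a^{\,n}b^{\,\lambda_1-n}$, where $\lambda_1$ is the length of the first row. This forced filling is legitimate exactly when $m-d\le n$, which holds in all cases (when $m\le n$ trivially, and when $m>n$ because the Remark forces $d\ge m-n$). Consequently the pair $(P,Q)$ is determined by $(\text{shape},Q)$, so $w\mapsto Q$ is injective, and RSK-inversion of $(P,Q)$ recovers a unique $w$; thus the map is a bijection from $\bigsqcup_{d}\NDDPF_{m,n,d}^{\uparrow}$ onto $\bigsqcup_{\lambda}\SYT(\lambda)$ over two-row shapes $\lambda\vdash m+n$.

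It remains to identify which shape is produced, and this is where the defect enters and where I expect the main difficulty to lie. By Greene's theorem, $\lambda_1$ equals the length of a longest weakly increasing subsequence of $w$. For a word over $\{a<b\}$, such a subsequence consists of all $a$'s up to some cut followed by all $b$'s after it, so $\lambda_1=m+\max_{t}\bigl(\#a-\#b\ \text{in the length-}t\ \text{prefix}\bigr)$; the running excess is precisely the ``dip'' height of the path, which reaches $n-m$ at the terminal corner and whose maximum equals $n-m+d$ by Theorem~\ref{thm:defect_characterization} (the floor $\max(\cdot,0)$ being automatic since the endpoint already forces the maximum to be at least $n-m$). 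Hence $\lambda_1=n+d$ and $\lambda_2=(m+n)-(n+d)=m-d$, so $Q$ has shape $(n+d,m-d)$; restricting the bijection to defect $d$ gives exactly $\DPF_{m,n,d}^{\uparrow}\bij\SYT(n+d,m-d)$. The delicate point is the uniform matching of the two statistics — the path-theoretic ``maximal dip'' of Theorem~\ref{thm:defect_characterization} against the RSK/Greene first-row length — across the boundary case $d=0$ (where the maximum is attained only at the endpoint) and the constraint $m-d\le n$ ensuring $P$ is well defined; as a numerical check, the resulting count is the ballot number $\tfrac{n-m+2d+1}{n+d+1}\binom{m+n}{n+d}=f^{(n+d,\,m-d)}$, matching the orbit count in the abstract.
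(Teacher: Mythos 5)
Your argument is correct, and it takes a genuinely different route from the paper. The paper never mentions RSK: it first reduces $\NDDPF_{m,n,d}$ to the set $\NDPF_{n+d,n+d}(x_{n+d}\leq m+1-d)$ of ordinary nondecreasing parking functions with bounded last entry (Corollary~\ref{cor:rational to classical}), via an induction on the defect that alternately applies a ``decrement at the first index of positive defect'' map $\varphi$ (Lemma~\ref{lem:dpf-bij}) and the inverse of a ``delete the last fixed point'' map $\psi$ (Lemma~\ref{lem:ndpf-bij}), carefully matching decrement sets with fixed-point sets, and handling $m<n$ by conjugating lattice paths (Proposition~\ref{prop:conj}); it then converts bounded nondecreasing parking functions to two-row tableaux through Dyck paths (Proposition~\ref{prop:SYT}). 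You instead apply RSK to the two-letter lattice word, observe that the insertion tableau is forced by its content (so the recording tableau alone carries all the information), and use Greene's theorem to translate the maximal prefix excess --- which Theorem~\ref{thm:defect_characterization} identifies as $n-m+d$ --- into the first-row length $n+d$; your checks of the boundary constraint $m-d\leq n$ and of the floor $\max(\cdot,0)$ at the terminal corner are exactly the right delicate points, and they go through. What each approach buys: the paper's chain is elementary and self-contained, yields the intermediate bijection with $\NDPF_{n+d,n+d}(x_{n+d}\leq m+1-d)$ (of independent interest), and gives an explicit step-by-step algorithm; your argument is shorter, treats all cases $m\lessgtr n$ uniformly, and produces a single ``one-shot'' bijection of the kind the authors explicitly request in item (2) of their Future Work --- at the cost of importing RSK and Greene's theorem as black boxes.
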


As a consequence of Theorem~\ref{thm:general} and the hook length formula (see \cite{FRT} or \cite[Section 3.10]{Sagan}), we establish a formula for the number of $\mathfrak{S}_m$-orbits of the set of defective parking functions with defect~$d$.

\begin{corollary}\label{cor:counting}
If $m,n\in\mathbb{N}$ and $d \in \mathbb{Z}_{\geq0}$, then
    \begin{align}\label{eq:cat convolution}
    |\NDDPF_{m,n,d}| &= \frac{n-m+2d+1}{n+d+1}\binom{m+n}{n+d}.
    \end{align}
\end{corollary}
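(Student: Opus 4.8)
The plan is to feed Theorem~\ref{thm:general} into the hook length formula. By that theorem, $\NDDPF_{m,n,d}$ is in bijection with the set of standard Young tableaux of shape $(n+d,m-d)$. Because the defect is $\mathfrak{S}_m$-invariant and every tuple has a unique nondecreasing rearrangement, each $\mathfrak{S}_m$-orbit of $\DPF_{m,n,d}$ contains exactly one element of $\NDDPF_{m,n,d}$; hence the number of orbits equals $|\NDDPF_{m,n,d}|$, which in turn equals $f^{(n+d,m-d)}$, the number of standard Young tableaux of that shape. The whole corollary therefore reduces to evaluating $f^{(n+d,m-d)}$ in closed form, and the only real work is a hook-length calculation for a two-row diagram.

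To carry this out, I would compute the hook lengths of the shape $(a,b)$ with $a=n+d$ and $b=m-d$ (so that $a \geq b$). In the first row the cell in column $j$ has hook length $a-j+2$ for $j \leq b$ and $a-j+1$ for $j>b$, while in the second row the cell in column $j$ has hook length $b-j+1$. Multiplying these gives a product equal to $\frac{(a+1)!\,b!}{a-b+1}$, so by the hook length formula
\[
f^{(a,b)} = \frac{(a+b)!}{\prod_{c} h(c)} = \frac{a-b+1}{a+1}\binom{a+b}{b}.
\]
(Equivalently, one may simply cite the standard ballot formula for two-row shapes.) Substituting $a=n+d$ and $b=m-d$ yields $a+b=m+n$, $a-b+1=n-m+2d+1$, $a+1=n+d+1$, and $\binom{a+b}{b}=\binom{m+n}{m-d}=\binom{m+n}{n+d}$, which is exactly \eqref{eq:cat convolution}.

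The one point requiring care is that $(n+d,m-d)$ be a genuine partition, i.e.\ that $n+d \geq m-d \geq 0$. Using Theorem~\ref{thm:defect_characterization} one checks that the defect never exceeds $m$ (the extreme case being every car preferring spot $n+1$, which gives $d=m$), so $m-d \geq 0$. For the inequality $n+2d \geq m$: when $m \leq n$ it is immediate since $d \geq 0$, and when $m>n$ the remark above guarantees $d \geq m-n$, whence $n+2d \geq 2m-n \geq m$. At the boundary $n+2d=m-1$ the set $\NDDPF_{m,n,d}$ is empty and the factor $n-m+2d+1$ vanishes, so the formula remains valid. Since the bijection of Theorem~\ref{thm:general} does all the heavy lifting, no genuine obstacle remains; the main thing to get right is the bookkeeping in the hook-length product.
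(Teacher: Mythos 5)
Your proposal is correct and follows essentially the same route as the paper: both invoke the bijection of Theorem~\ref{thm:general} (which the paper applies via Corollary~\ref{cor:rational to classical} and Proposition~\ref{prop:SYT}) to identify $\NDDPF_{m,n,d}$ with $\SYT(n+d,m-d)$ and then apply the hook length formula. Your explicit hook-length computation and the check that $(n+d,m-d)$ is a valid partition are details the paper leaves implicit, but the argument is the same.
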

When $d=0$ all cars can park and Equation~\eqref{eq:cat convolution} reduces to the Catalan numbers when $n=m$. 
Moreover, in Theorem~\ref{thm:size of orbit}, we provide a multinomial formula for the size of the $\Sym_{m}$-orbit given a nondecreasing defective parking function. 
We then use this result to give the following  new formula for the number of defective parking functions as multiset permutations of nondecreasing defective parking functions. 
\begin{theorem}\label{thm:new formula}
If $m,n,d\in\NN$,
then
    \begin{align}\label{eq:full count}
    |\DPF_{m,n,d}| &= \sum_{\x \in \NDDPF_{m,n,d}}\binom{m}{\ell_1(\x),\ell_2(\x),\ldots,\ell_{n}(\x)},
    \end{align}
   where, for $1\leq i\leq n$, $\ell_i(\x)$ denotes the number of elements of $\x$ whose value is $i$.
\end{theorem}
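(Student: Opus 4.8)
The plan is to count $\DPF_{m,n,d}$ by summing the sizes of its $\mathfrak{S}_m$-orbits, using the nondecreasing defective parking functions as a transversal. First I would invoke the $\mathfrak{S}_m$-invariance of defect established earlier in the paper: since permuting the coordinates of a preference list preserves both membership in $[n+1]^m$ and the defect, the coordinate-permuting action of $\mathfrak{S}_m$ restricts to an action on $\DPF_{m,n,d}$. Consequently $\DPF_{m,n,d}$ decomposes as a disjoint union of $\mathfrak{S}_m$-orbits, and it suffices to identify a complete set of orbit representatives and to compute the size of each orbit.

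Second, I would show that $\NDDPF_{m,n,d}$ is a complete and irredundant set of orbit representatives. Given any $\y \in \DPF_{m,n,d}$, sorting its entries into nondecreasing order produces a tuple $\x$ with the same multiset of entries, hence the same defect by \Cref{thm:defect_characterization}; thus $\x \in \NDDPF_{m,n,d}$ and $\x$ lies in the orbit of $\y$. Moreover, two preference lists lie in the same $\mathfrak{S}_m$-orbit if and only if they have the same multiset of entries, so each orbit contains exactly one nondecreasing element. Therefore the orbits of $\DPF_{m,n,d}$ are indexed precisely by the elements of $\NDDPF_{m,n,d}$, and $|\DPF_{m,n,d}| = \sum_{\x \in \NDDPF_{m,n,d}} |\mathfrak{S}_m \cdot \x|$.

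Third, I would compute the size of the orbit of a fixed $\x \in \NDDPF_{m,n,d}$ as the number of distinct rearrangements of its multiset of entries. Writing $\ell_i(\x)$ for the number of coordinates of $\x$ equal to $i$ for $i \in [n+1]$, so that $\sum_{i=1}^{n+1}\ell_i(\x) = m$, this number is $m!/\prod_{i=1}^{n+1}\ell_i(\x)!$; by \Cref{thm:size of orbit} this equals the multinomial coefficient $\binom{m}{\ell_1(\x),\ldots,\ell_{n}(\x)}$, where the unwritten final part absorbs the $\ell_{n+1}(\x) = m - \sum_{i=1}^n \ell_i(\x)$ coordinates equal to $n+1$. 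Substituting these orbit sizes into the sum over the transversal then yields the claimed formula. The only delicate point, and the one step I would check carefully, is this bookkeeping for the value $n+1$: the multinomial coefficient in the statement lists only the multiplicities $\ell_1,\ldots,\ell_n$, and one must confirm that the standard convention for a multinomial coefficient whose listed parts sum to at most $m$ correctly records the remaining coordinates, which is exactly what \Cref{thm:size of orbit} provides. Beyond this, the argument is a routine orbit-decomposition count with no substantial obstacle.
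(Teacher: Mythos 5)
Your proof is correct and follows essentially the same route as the paper: the paper's proof of Theorem~\ref{thm:new formula} is precisely the one-line observation that the formula follows by summing the orbit sizes from Theorem~\ref{thm:size of orbit} over $\NDDPF_{m,n,d}$, and your three steps (invariance of defect, nondecreasing representatives as a transversal, multinomial orbit sizes) simply make that summation explicit. Your closing remark about entries equal to $n+1$ is well taken --- the displayed multinomial must be read with an implicit final part $\ell_{n+1}(\x)!$ in the denominator, a convention the paper leaves unstated --- but this is a notational point rather than a gap in either argument.
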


The paper is organized as follows.
In Section~\ref{sec:background}, we give a way to determine the defect of a preference list and use this result to prove Theorem~\ref{thm:defect_characterization}.
In Section~\ref{sec:rep theory},
we give a formula for the graded Frobenius characteristic of the space of defective parking functions.
In Section~\ref{sec:main bijection}, we prove Theorem~\ref{thm:general}
by establishing a set of bijections from which the result follows.
Section~\ref{sec:enumerative} contains proofs of the enumerative results in Corollary~\ref{cor:counting}, Theorem~\ref{thm:size of orbit}, and Theorem~\ref{thm:new formula}. We conclude with directions for future research.

\section{A Formula for Defect and \texorpdfstring{$\mathfrak{S}_m$}{Sn}-Invariance}\label{sec:background}
Prominent literature defines parking functions as elements of $[n]^m$, see \cite{Konheim_Weiss} or \cite[Exercise 5.29.\ a.]{StanleyECVol2}. As we are concerned with defective parking functions where not all cars may manage to park, we include the possible preference $n+1$ to indicate the possibility of a car preferring none of the available spots. Considering parking functions as elements of $[n+1]^m$ facilitates clarity when we introduce the conjugate of a parking function (Definition~\ref{def:conjugate-lat}). Moreover, it creates the possibility of a defective parking function where no cars manage to park, in which all cars simply enter and exit the parking lot without desiring to park. 
A  preference list that includes $n+1$ is not a parking function, so in the study of standard parking functions this difference in convention is immaterial.
Thus, we employ the following definitions and notation.

\begin{notation} \label{notation}Throughout the article we use the following notation and definitions.
\begin{itemize}[leftmargin=.2in]
\item We use bold font to denote that $\x$ is a  preference list and we let $x_i$ denotes the $i$th element of $\x$.
\item Given a set of preference lists $S$ and a condition $P$ on $S$, we let $S(P)$ denote the subset of elements of $S$ that satisfy $P$.
\item Throughout a \textit{preference list} is an element $\x \in[n+1]^m$.
\item A \textit{parking function} is a preference list in which no car parks beyond the $n$th spot using the classical parking scheme.
\item The set of parking functions with $m$ cars and $n$ spots is denoted $\PF_{m,n}$.
\item  The subset  of $\PF_{m,n}$ consisting of nondecreasing parking functions is denoted by $\NDPF_{m,n}$. 
\end{itemize}
\end{notation}


\begin{definition}\label{def:nonneg defect}
    A \textit{defective parking function} with defect $d$ is a  preference list in which exactly $d$ cars fail to park under the classical parking scheme. 
    The set of defective parking functions with $m$ cars, $n$ spots, and defect $d$ is denoted by $\DPF_{m, n,d}$. The subset of $\DPF_{m,n,d}$ whose elements are nondecreasing is denoted by $\NDDPF_{m,n,d}$.
\end{definition}

\begin{example}\label{ex:defective}
 Let $m=7$ and $n=9$. Consider the nondecreasing preference list $(3,5,5,6,9,9,10)\in[10]^7$. Cars park/fail to park as illustrated in Figure~\ref{fig:example1}. Since the preference list was nondecreasing, the cars parked (on an infinitely long parking lot) in the order they arrived. 
The two cars that park in a spot numbered $j \geq n+1 = 10$ failed to park in the parking lot.  Hence, the defect of $(3,5,5,6,9,9,10)$ is two. 
\begin{figure}[!ht]
    \centering
    \resizebox{\textwidth}{!}{
    \begin{tikzpicture}
    \draw[step=1cm,gray,very thin] (0,0) grid (11,1);
    \draw[fill=gray!50] (2.1,0.1) rectangle (2.9,.9);
     \node at (2.5,.5) {$1$};
     \draw[fill=gray!50] (4.1,0.1) rectangle (4.9,.9);
     \node at (4.5,.5) {$2$};
     \draw[fill=gray!50] (5.1,0.1) rectangle (5.9,.9);
     \node at (5.5,.5) {$3$};
     \draw[fill=gray!50] (6.1,0.1) rectangle (6.9,.9);
     \node at (6.5,.5) {$4$};
     \draw[fill=gray!50] (8.1,0.1) rectangle (8.9,.9);
    \node at (8.5,.5) {$5$};
     \draw[fill=red!50] (9.1,0.1) rectangle (9.9,.9);
    \node at (9.5,.5) {$\textbf{6}$};
     \draw[fill=red!50] (10.1,0.1) rectangle (10.9,.9);
    \node at (10.5,.5) {$\textbf{7}$};
    \node at (11.05,-.25){cars that failed to park};
     \node at (.5,-.25) {$1$};
    \node at (1.5,-.25) {$2$};
    \node at (2.5,-.25) {$3$};
    \node at (3.5,-.25) {$4$};
    \node at (4.5,-.25) {$5$};
    \node at (5.5,-.25) {$6$};
    \node at (6.5,-.25) {$7$};
    \node at (7.5,-.25) {$8$};
    \node at (8.5,-.25) {$9$};
    \draw[ultra thick](9,-.5)--(9,1.5);
    \node at (9,1.75){end of lot};
    \end{tikzpicture}
    }
    \caption{Parking position of cars with preference list $(3,5,5,6,9,9,10)\in\NDDPF_{7,9,2}$.
}
    \label{fig:example1}
\end{figure}
\end{example}

\begin{definition}\label{def:predefect}
    For $\x\in[n+1]^m$, the \emph{predefect sequence} of $\x$ is 
    \[\gamma(\x) = (x_1 - 1, x_2 - 2, \ldots, x_m - m).\]
    For $i \in [m]$, the $i$th entry $\gamma(\x)_i$ of the predefect sequence $\gamma(\x)$ is the \emph{predefect of $\x$ at $i$}. The \emph{predefect} of $\x$ is 
    \[\pde(\x)=\max(\gamma(\x)).\]
\end{definition}

When we compute the defect of a parking function with $m$ cars and $n$ spots, we find it necessary to normalize by $m-n$ and ensure nonnegativity. Therefore, we define analogous terms to predefect for defect.

\begin{definition} 
If $\x \in \NDDPF_{m,n,d}$, the \emph{defect sequence} of $\x$ is \[\delta(\x)=(\gamma(\x)_1 + (m-n), \gamma(\x)_2 + (m-n), \ldots, \gamma(\x)_m + (m-n)).\]
For $i\in [m]$, the  $i$th entry $\delta(\x)_i$ of the defect sequence $\delta(\x)$ is called the \textit{defect of $\x$ at $i$.} 
\end{definition}
In Theorem~\ref{thm:defect_characterization}, we prove that the defect of $\x$ is
\[\de(\x) = \max_{j \in [m]}(\delta(\x)_j, 0) = \max(\pde(\x) + (m-n), 0).\]

\begin{remark}In the case that $m=n$ and $d=0$, $f: \x \mapsto -\delta(\x)$ gives a bijection between $\NDPF_{n,n}$ and Catalan words. For more on Catalan words we refer the interested reader to \cite[Exercise 6.19.\ u, p.\ 222]{StanleyECVol2}.\end{remark}

\begin{example}[Example~\ref{ex:defective} continued]\label{ex:nonneg defect}
Consider the tuple $\x=(3,5,5,6,9,9,10)$ in $\NDDPF_{7,9,2}$. Note 
$\delta(\x)=
(0,1,0,0,2,1,1)$.
The maximum value in $\delta(\x)$ is two, which happens to be $\de(\x)$, the defect of $\x$. 
\end{example}

\begin{lemma}
If $\x\in\NDDPF_{m,n,d}$, then $\max(m-n, 0)\leq \de(\x)\leq m$. 
\end{lemma}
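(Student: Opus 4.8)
The plan is to reduce everything to the closed formula established just after Theorem~\ref{thm:defect_characterization}, namely
\[
\de(\x) = \max\bigl(\pde(\x) + (m-n),\, 0\bigr),
\]
where $\pde(\x) = \max_i \gamma(\x)_i$ and $\gamma(\x)_i = x_i - i$. Once this identity is in hand, each of the two inequalities follows by bounding the predefect $\pde(\x)$ from below and above using only the hypotheses that $\x$ is nondecreasing and that $\x \in [n+1]^m$. So the first step is simply to invoke that formula and observe that the whole lemma is a statement about $\pde(\x)$ after an affine shift by $m-n$ and a truncation at $0$.

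For the lower bound, I would argue that $\pde(\x) \geq 0$. Since the entries of $\x$ are positive integers, the first entry satisfies $x_1 \geq 1$, so $\gamma(\x)_1 = x_1 - 1 \geq 0$, and hence $\pde(\x) = \max_i \gamma(\x)_i \geq 0$. This gives $\pde(\x) + (m-n) \geq m-n$, and since the map $a \mapsto \max(a,0)$ is monotone, applying it to both sides yields
\[
\de(\x) = \max\bigl(\pde(\x)+(m-n),\,0\bigr) \;\geq\; \max(m-n,\,0),
\]
which is exactly the desired lower bound. (Combinatorially this matches the remark that at least $m-n$ cars must fail when $m>n$, and that the defect is always nonnegative.)

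For the upper bound, I would bound $\pde(\x)$ from above. Because $\x \in [n+1]^m$, every entry satisfies $x_i \leq n+1$, so for each $i \in [m]$ we have $\gamma(\x)_i = x_i - i \leq (n+1) - i \leq n$, the last inequality using $i \geq 1$. Taking the maximum over $i$ gives $\pde(\x) \leq n$, hence $\pde(\x) + (m-n) \leq m$, and therefore $\de(\x) = \max(\pde(\x)+(m-n),\,0) \leq \max(m,\,0) = m$. Here nondecreasingness is not even needed for the upper bound; one could also argue purely combinatorially that there are only $m$ cars, so at most $m$ of them can fail to park. I do not anticipate a genuine obstacle in this proof: the only point requiring care is making sure the truncation at $0$ is handled correctly when passing the inequalities through $\max(\cdot, 0)$, which is resolved by the monotonicity of that operation.
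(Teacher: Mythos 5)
Your proof is correct, but it takes a genuinely different route from the paper's. The paper argues purely combinatorially: the minimum possible defect $\max(m-n,0)$ is achieved when all cars park in the first $m$ spots, and the maximum $m$ is achieved when every car prefers spot $n+1$ and fails to park; the bounds for an arbitrary $\x$ are then asserted from these extremal configurations. You instead derive both bounds from the identity $\de(\x)=\max(\pde(\x)+(m-n),0)$, bounding $\pde(\x)$ between $0$ (via $x_1\geq 1$) and $n$ (via $x_i\leq n+1$) and passing the inequalities through the monotone map $a\mapsto\max(a,0)$. Your version is arguably the more rigorous of the two: the paper exhibits configurations attaining the two extremes but does not explicitly argue that no preference list falls outside them, whereas your bounds hold uniformly for every $\x$. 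The one point to flag is ordering: the identity you invoke is only proved in the paper in Lemma~\ref{prop:defect2}, which appears \emph{after} the present lemma in the text (and only for nondecreasing lists, which suffices here since $\x\in\NDDPF_{m,n,d}$). Since the proof of that identity does not rely on these bounds, there is no circularity, but placing your argument in the paper would require either reordering the statements or retaining the paper's direct combinatorial justification.
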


\begin{proof}
    There are $n-m$ more parking spots than cars. The minimum possible defect for any preference list in $[n+1]^m$ is $\max(m-n,0)$, which is achieved whenever all of the cars park in the first $m$ parking spots. 
    
The maximum possible defect for any preference list in $[n+1]^m$ is $m$, which is achieved whenever all of the cars prefer the $(n+1)$st parking spot and all fail to park in the first $n$ parking spots.
Hence $\max(m-n,0)\leq \de(\x)\leq m$.
\end{proof}

Next we prove that in fact the defect of a nondecreasing parking list $\x$ is always the maximum entry in $\delta(\x)$. This allows us to determine the defect of a  preference list without having to employ the parking scheme. 

\begin{lemma}\label{prop:defect2}
If $\x$ is a nondecreasing element of $[n+1]^m$, then 
\[\de(\x)=\max(\delta(\x),0).\]
\end{lemma}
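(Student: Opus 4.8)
The plan is to track explicitly where each car parks under the classical scheme and to relate the landing spot of the last car to the defect sequence. Since $\x=(x_1,\dots,x_m)$ is nondecreasing, the cars enter in the same order as their preferences, so no car is ever blocked by a car behind it. Writing $p_i$ for the spot in which car $i$ parks on the infinitely long road, I would first argue that $p_0=0$ and
\[
p_i=\max(x_i,\,p_{i-1}+1)\qquad(1\le i\le m),
\]
because car $i$ either finds its preference $x_i$ free (when $x_i>p_{i-1}$) or is pushed to the first open spot, which is $p_{i-1}+1$ since the earlier cars occupy a set of spots whose maximum is $p_{i-1}$.

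Next I would unfold this recursion. A short induction gives $p_i=\max_{1\le k\le i}(x_k+i-k)$, and in particular
\[
p_m=\max_{1\le k\le m}(x_k-k)+m=\pde(\x)+m,
\]
so the final car lands in spot $\pde(\x)+m$.

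The third step is to show that the number of cars that fail to park equals $\max(p_m-n,0)$. From the recursion, $p_1<p_2<\cdots<p_m$ are strictly increasing, so the failing cars (those with $p_i>n$) form a suffix $\{j,j+1,\dots,m\}$. The key point is that preferences are bounded by $n+1$: if $j$ is the first failing car then $p_{j-1}\le n$, whence $p_j=\max(x_j,p_{j-1}+1)\le n+1$, forcing $p_j=n+1$; and for every later index $i$ one has $x_i\le n+1< p_{i-1}+1$, so $p_i=p_{i-1}+1$. Hence the failing cars occupy the consecutive spots $n+1,n+2,\dots,p_m$, and their number is exactly $p_m-n$. When no car fails, i.e.\ $p_m\le n$, the count is $0$, so $\de(\x)=\max(p_m-n,0)$ in all cases.

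Finally I would combine the two computations. Substituting $p_m=\pde(\x)+m$ gives
\[
\de(\x)=\max(\pde(\x)+m-n,\,0)=\max\bigl(\pde(\x)+(m-n),\,0\bigr),
\]
and since $\max(\delta(\x),0)=\max_{j}\bigl(\gamma(\x)_j+(m-n),0\bigr)=\max\bigl(\pde(\x)+(m-n),0\bigr)$ by the definitions of the predefect and defect sequences, the two sides agree. I expect the main obstacle to be the third step: one must argue that the failing cars sit in consecutive spots, so that their number can be read off directly as $p_m-n$, and this is exactly where the restriction $x_i\le n+1$ (rather than unbounded preferences) does the essential work.
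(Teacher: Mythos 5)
Your proof is correct and follows essentially the same route as the paper's: both derive the recursion for where car $i$ parks under a nondecreasing preference list, solve it to get $p_m=m+\max_j(x_j-j)$, and use the bound $x_i\le n+1$ to conclude the overflow cars occupy consecutive spots past $n$, so the defect is $\max(p_m-n,0)$. The only difference is cosmetic: the paper phrases the recursion in terms of wasted spots $w_k=p_k-k$, while you track the positions $p_k$ directly.
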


\begin{proof}
    Consider first the case of $m$ cars parking in an infinite parking lot with a nondecreasing preference list. We call a spot \emph{wasted} if there is no car in that spot and there is a car in some spot to its right. If we write $w_k$ for the number of wasted spots immediately after car $k$ parks, then we have that the parking position of car $k$ satisfies $p_k=k+w_k$.

    Because the preference list is nondecreasing, the number of new wasted spots created when car $k+1$ parks is given by the number of spots after $p_k$ and before $x_{k+1}$. That is, \begin{align*}
        w_{k+1}-w_k&=\max\{x_{k+1}-p_k-1,0\}\\
        &=\max\{x_{k+1}-(k+1)-w_k,0\}.\\
        \intertext{Then, adding $w_k$ to both sides yields}
        w_{k+1}&=\max\{x_{k+1}-(k+1),w_k\}.
    \end{align*}

    Because $w_1=x_1-1$ and each $w_k$ is obtained recursively as above, we have that \begin{align*}
        w_k&=\max_{j\in[k]}\{x_j-j\}
    \end{align*} for $1\leq k\leq m$.

    Note that if no car's preference exceeds $n+1$, then there can be no wasted spaces past spot $n$. In this case, the number of cars that park in a position larger than $n$ is computed as $n$ less than the parking position of car $m$ (or zero if this subtraction is negative). That is, \begin{align*}
        \de(\x)&=\max\{p_m-n,0\}\\
        &=\max\{m+w_m-n,0\}\\
        &=\max_{j\in[m]}\{x_j-j+(m-n),0\}\\
        &=\max_{j\in[m]}(\delta(\x)_j,0).\qedhere
    \end{align*}
\end{proof}

We conclude this section with a proof that the defect is invariant under the action of the symmetric group.

\begin{proof}[[Proof of Theorem~\ref{thm:defect_characterization}]]
 Stanley and Pitman \cite{PitmanStanley} introduced the following definition: For an integer tuple $\textbf{u}=(u_1,u_2,\ldots,u_k)$ with $k\in\NN$, the set of $\textbf{u}$-parking functions consist of the sequences
$(a_1,a_2,\ldots,a_k)$ whose rearrangements into nondecreasing order, denoted by $(b_1,b_2,\ldots,b_k)$, satisfy $b_i \leq u_1+\cdots+u_i$ for all $1\leq i\leq k$. 
In this way, when $\uu=(1,1,\ldots,1)\in[n]^m$, the set of $\uu$-parking functions is precisely the set of $(m,n)$-parking functions. Moreover, Cameron et.~al~\cite[p.\ 3]{defective} 
give a characterization of defective parking functions with $m$ cars, $n$ parking spots, and defect $d$ as \[(n-(m-d)+1\;,\;\underbrace{1,\;1,\;\ldots\;,1}_{\begin{matrix}2m - n - d\\ \mbox{copies}\end{matrix}}\;,\;\underbrace{0,\;0,\;\ldots\;,0}_{\begin{matrix}d+(n-m)\\ \mbox{copies}\end{matrix}})\mbox{-parking functions}\] that are not \[(n-(m-d)\;,\;\underbrace{1,\;1,\;\ldots\;,1}_{\begin{matrix}2m - n - d\\ \mbox{copies}\end{matrix}}\;,\;\underbrace{0,\;0,\;\ldots\;,0}_{\begin{matrix}d+(n-m)\\\mbox{copies}\end{matrix}})\mbox{-parking functions.}\] As this characterization holds for any initial parking function with defect $d$, the defect only depends on the content of the parking function and not on the order of the entries. Combining this fact with \Cref{prop:defect2}, we obtain the desired result.
\end{proof}

\section{Defective Parking Spaces}\label{sec:rep theory}

In this section, we consider permutation representations of the symmetric group arising from defective parking functions. We assume familiarity with the representation theory of the symmetric group and symmetric function terminology (see \cite{Sagan} for a reference on these topics).

For a set $S$, let $\CC S$ denote the vector space of formal linear combinations of elements of $S$. Let $\Park_{m,n}=\CC\PF_{m,n}$. The \emph{classical parking space} $\Park_{n,n}$ appears in formulas for the decomposition of the $\mathfrak{S}_m$-representation $D=R/(R_+^{\mathfrak{S}_m})$ where \[R=\CC[x_1,x_2,\ldots,x_m,y_1,y_2,\ldots,y_m]\] and $R_+^{\mathfrak{S}_m}\subseteq R$ is the ideal generated by elements of $R$ without a constant term that are invariant under simultaneous permutation of the variables $x_i$ and $y_i$. This space is called the \emph{diagonal coinvariants} (see \cite{haiman2001vanishing}).

The \emph{Frobenius characteristic map} is an isomorphism between the ring $R$ of characters for symmetric group representations and the ring $\Lambda$ of symmetric functions. Identifying a representation $V$ of $\mathfrak{S}_m$ with its character function $\chi_V:\mathfrak{S}_m\to\CC$, we write \begin{align*}
    \ch(V)&=\frac{1}{m!}\sum_{\sigma\in \mathfrak{S}_m}\chi_V(\sigma)p_{c(\sigma)}
\end{align*} where $c(\sigma)$ is the integer partition denoting the cycle type of $\sigma$ and $p_\lambda$ are the \emph{power sum symmetric functions}. For our purposes, the basis of $\Lambda$ we are most interested in is that of the \emph{complete homogeneous symmetric functions} $h_\lambda$, which are the image of the induction of the trivial representation $\mathbb{1}$ of the Young subgroup $\mathfrak{S}_\lambda=\mathfrak{S}_{\lambda_1}\times \mathfrak{S}_{\lambda_2}\times\cdots\times \mathfrak{S}_{\lambda_k}$ to $\mathfrak{S}_m$. That is, \begin{align*}
    h_\lambda=\ch\left(\Ind_{\mathfrak{S}_\lambda}^{\mathfrak{S}_m}(\mathbb{1})\right)
\end{align*} for $\lambda\vdash m$.

For $\lambda\vdash m$ of length $k$ let \begin{align}
    \Krew(\lambda)=\frac{1}{m+1}{m+1\choose m+1-k,\mu_1(\lambda),\mu_2(\lambda),\ldots,\mu_m(\lambda)}\label{eq:kreweras_number}
\end{align} be the \emph{Kreweras number} of $\lambda$
introduced by Kreweras in \cite{K72}. It is well-known (see \cite{haiman1994conjectures} or \cite[Proposition 2.2]{stanley1997parking}) that \begin{align*}
    \ch(\Park_{n,n})=\sum_{\lambda\vdash n} \Krew(\lambda)h_\lambda.
\end{align*} 

\begin{definition}
    Let $\DPark_{m,n}=\CC [n+1]^m$. This vector space has a grading by defect. That is, we let \[\DPark_{m,n}^{(d)}=\CC\{\x:\de(\x)=d\},\] and we call $\DPark_{m,n}$ with this grading the \emph{defective parking space}.
\end{definition}

 By \Cref{thm:defect_characterization} each subspace $\DPark_{m,n}^{(d)}$ is an $\mathfrak{S}_m$-representation, so we can compute a graded Frobenius characteristic \[\ch_t(\DPark_{m,n})=\sum_{d=\max(m-n,0)}^{m}\ch(\DPark_{m,n}^{(d)})t^d.\] 

\begin{remark}
    This graded Frobenius characteristic $\ch_t(\DPark_{m,n})$ has two interesting specializations:
    \begin{enumerate}
        \item When $t=0$, we have that $\ch_0(\DPark_{m,n})=\ch(\Park_{m,n})$ where $\mathfrak{S}_m$ acts by permuting the order of the preference list.
        \item When $t=1$, we have that $\ch_1(\DPark_{m,n})=\ch(\left(\CC^{n+1}\right)^{\otimes m})$ where $\mathfrak{S}_m$ acts by permuting tensor factors.
    \end{enumerate}
\end{remark}

\begin{lemma}\label{lem:fixed_content_frob}
    For $\alpha=(a_1,a_2,\ldots,a_{n+1})$ a weak composition of $m$, let $V_\alpha\subseteq \DPark_{m,n}$ be the span of words $w$ where the multiplicity of $i$ in $w$ is $\alpha_i$. Then \begin{align*}
        \ch(V_\alpha)=h_\alpha.
    \end{align*}
\end{lemma}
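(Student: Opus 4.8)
The plan is to recognize $V_\alpha$ as a transitive $\mathfrak{S}_m$-permutation module and to identify its point stabilizer as the Young subgroup $\mathfrak{S}_\alpha$, after which the claim follows immediately from the definition of $h_\alpha$ via the Frobenius characteristic map recorded above.

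First I would note that the set $B_\alpha$ of words $w \in [n+1]^m$ whose content is $\alpha$ (that is, with exactly $\alpha_i$ occurrences of the letter $i$ for each $i$) is a basis of $V_\alpha$ that is permuted by $\mathfrak{S}_m$: since $\sigma$ acts by permuting the positions of a word, the multiplicity of each letter is preserved, so $B_\alpha$ is $\sigma$-stable and $V_\alpha$ is a genuine subrepresentation. Moreover $\mathfrak{S}_m$ acts transitively on $B_\alpha$, because any two words with the same content differ by a rearrangement of their positions.

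Next I would fix a reference word $w_0 \in B_\alpha$ and compute its stabilizer. Writing $S_i$ for the set of positions of $w_0$ holding the letter $i$, a permutation $\sigma$ satisfies $\sigma \cdot w_0 = w_0$ exactly when $\sigma$ preserves each $S_i$ setwise, i.e. when $\sigma \in \prod_{i} \mathfrak{S}_{S_i} \cong \mathfrak{S}_{\alpha_1}\times\cdots\times\mathfrak{S}_{\alpha_{n+1}} = \mathfrak{S}_\alpha$, where the parts $\alpha_i = 0$ contribute the trivial group. By the orbit--stabilizer description of a transitive permutation module, this yields an isomorphism of $\mathfrak{S}_m$-representations $V_\alpha \cong \mathbb{C}[\mathfrak{S}_m/\mathfrak{S}_\alpha] \cong \Ind_{\mathfrak{S}_\alpha}^{\mathfrak{S}_m}(\mathbb{1})$.

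Finally, applying $\ch$ together with the definition $h_\lambda = \ch\big(\Ind_{\mathfrak{S}_\lambda}^{\mathfrak{S}_m}(\mathbb{1})\big)$ gives $\ch(V_\alpha) = h_\alpha$, where $h_\alpha = \prod_i h_{\alpha_i}$ coincides with $h_\lambda$ for $\lambda$ the partition obtained by sorting the nonzero parts of $\alpha$ (using $h_0 = 1$). I expect no serious obstacle here: the only points requiring care are verifying transitivity of the action on $B_\alpha$ and pinning down the stabilizer as precisely $\mathfrak{S}_\alpha$, together with reconciling the composition-indexed $h_\alpha$ with the paper's partition-indexed definition of the complete homogeneous symmetric functions.
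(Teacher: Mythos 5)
Your proof is correct and follows essentially the same route as the paper's: identify $V_\alpha$ as the transitive permutation module on words of content $\alpha$, compute the point stabilizer as the Young subgroup $\mathfrak{S}_\alpha$, and conclude $V_\alpha \cong \Ind_{\mathfrak{S}_\alpha}^{\mathfrak{S}_m}(\mathbb{1})$ so that $\ch(V_\alpha)=h_\alpha$. You in fact supply more detail than the paper (transitivity, the explicit stabilizer computation, and the reconciliation of $h_\alpha$ with the partition-indexed $h_\lambda$), all of which is sound.
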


\begin{proof}
    The action of $\mathfrak{S}_m$ on $V_\alpha$ is isomorphic to the permutation representation of $\mathfrak{S}_m$ acting by left-multiplication on the right cosets of the Young subgroup $\mathfrak{S}_\alpha\subseteq \mathfrak{S}_m$. Hence, $V_\alpha$ is isomorphic to the induction of the trivial representation of $\mathfrak{S}_\alpha$ to $\mathfrak{S}_m$.
\end{proof}

\begin{definition}\label{def:defkreweras}
    Given $d\geq0$, define the \emph{defective Kreweras number} $\Krew_{d,n}(\lambda)$ of a partition $\lambda\vdash m$ of length $k$ to be the number of tuples $\x\in[n+1]^m$ of the form $\x=({a_1}^{\lambda_{\sigma(1)}},{a_2}^{\lambda_{\sigma(2)}},\ldots,{a_k}^{\lambda_{\sigma(k)}})$ where ${a_i}^{\lambda_{\sigma(i)}}$ represents a string of $\lambda_{\sigma(i)}$ copies of $a_i$ for some $0<a_1<a_2<\cdots<a_k\leq n+1$ and $\sigma\in \mathfrak{S}_k$ such that $\max(\pde(\x),0)=d$.
\end{definition}

\begin{remark}
    When $d=0$ and $m=n$, the defective Kreweras numbers specialize to the ordinary Kreweras numbers, enumerating nondecreasing parking functions of the form given in \Cref{def:defkreweras}.
\end{remark}

\begin{example} Let $\lambda=(2,1)$ and $n=3$. In the following table we provide some computations:
    \[\begin{array}{c|c||c|c}
        \x & \max(x_i-i,0) & \x & \max(x_i-i,0)\\\hline
        (1,1,2) & 0 & (1,2,2) & 0 \\
        (1,1,3) & 0 & (1,3,3) & 1 \\
        (1,1,4) & 1 & (1,4,4) & 2 \\
        (2,2,3) & 1 & (2,3,3) & 1 \\
        (2,2,4) & 1 & (2,4,4) & 2 \\
        (3,3,4) & 2 & (3,4,4) & 2
    \end{array}\]

    So, \[
        \Krew_{0,3}(2,1)=3,\quad
        \Krew_{1,3}(2,1)=5,\mbox{ and}\quad
        \Krew_{2,3}(2,1)=4.
    \]
\end{example}

\begin{theorem}
    The graded Frobenius characteristic $\ch_t(\DPark_{n,n})$ is given by
    \begin{align*}
        \ch_t(\DPark_{n,n})&=\sum_{\lambda\vdash n}\sum_{d\geq 0}\Krew_{d,n}(\lambda)t^dh_\lambda.
    \end{align*}
    More generally, the graded Frobenius characteristic $\ch_t(\DPark_{m,n})$ is given by \begin{align*}
        \ch_t(\DPark_{m,n})&=\\
        &\hspace{-.5in}\sum_{\lambda\vdash m}\left(\sum_{d=0}^{n-m}\Krew_{d,n}(\lambda)+\sum_{d>m-n}\Krew_{d-(m-n),n}(\lambda)t^d\right)h_\lambda.
    \end{align*}
\end{theorem}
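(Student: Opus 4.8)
The plan is to decompose the defective parking space into the fixed-content subspaces $V_\alpha$ from \Cref{lem:fixed_content_frob} and track how defect distributes across them. Since $\DPark_{m,n}=\CC[n+1]^m=\bigoplus_\alpha V_\alpha$, where $\alpha$ ranges over weak compositions of $m$ into $n+1$ parts, and since $\ch(V_\alpha)=h_\alpha$, the whole computation reduces to grouping these $h_\alpha$ by the underlying partition $\lambda$ (the sorted nonzero parts of $\alpha$) and by the common defect value on each $V_\alpha$. The crucial input is that defect is constant on each $V_\alpha$: by \Cref{thm:defect_characterization} the defect depends only on the content (multiset of entries), which is exactly what defines $V_\alpha$, so every word in $V_\alpha$ shares a single defect value and $V_\alpha\subseteq\DPark_{m,n}^{(d)}$ for that $d$.

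First I would fix a partition $\lambda\vdash m$ of length $k$ and enumerate the compositions $\alpha$ contributing $h_\lambda=h_\alpha$. Writing $\alpha$ as a sequence of $k$ nonzero parts whose sorted values form $\lambda$, placed among $n+1$ positions, each such $\alpha$ corresponds precisely to a tuple $\x=(a_1^{\lambda_{\sigma(1)}},\ldots,a_k^{\lambda_{\sigma(k)}})$ with $0<a_1<\cdots<a_k\le n+1$ as in \Cref{def:defkreweras}; the positions $a_i$ of the nonzero parts record which entry values appear, and the nondecreasing representative of $V_\alpha$ is exactly this $\x$. Computing the defect of $V_\alpha$ on its nondecreasing representative, \Cref{prop:defect2} gives $\de(\x)=\max(\pde(\x)+(m-n),0)$. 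Thus the number of compositions yielding $h_\lambda$ with a prescribed defect $d$ equals the number of such tuples $\x$ with $\max(\pde(\x)+(m-n),0)=d$.

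Next I would translate this count into $\Krew_{d,n}(\lambda)$, whose definition counts tuples $\x$ of this exact shape with $\max(\pde(\x),0)=d$. The only discrepancy is the shift by $m-n$: defect normalizes predefect by $m-n$ and truncates at zero, whereas the defective Kreweras number truncates predefect itself at zero. I would handle the two regimes separately. For $d>m-n$ the truncation at zero is inactive, so $\de(\x)=d$ forces $\pde(\x)=d-(m-n)>0$, and the count of contributing tuples is $\Krew_{d-(m-n),n}(\lambda)$, giving the $t^d$ terms. For the low range $0\le d\le \max(m-n,0)$ the minimum-defect lemma pins the defect at its floor, and summing over $\pde(\x)$ from $0$ up through $n-m$ collects all tuples whose defect sits at this minimum value; these contribute the untwisted $\sum_{d=0}^{n-m}\Krew_{d,n}(\lambda)$ block (with the convention that this sum is empty when $m>n$). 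Assembling the two regimes and summing $\ch(\DPark_{m,n}^{(d)})t^d$ over $d$ yields the stated formula, and the $n=m$ specialization follows since then $m-n=0$ and the shift disappears, collapsing everything to $\sum_{d\ge0}\Krew_{d,n}(\lambda)t^d$.

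The main obstacle I anticipate is bookkeeping the boundary between the two defect regimes correctly, in particular verifying that the low-defect tuples with $\pde(\x)\le n-m$ all collapse to the single minimal defect value $\max(m-n,0)$ rather than spreading across several graded pieces, and confirming that no tuple is double-counted or dropped at the threshold $d=m-n$. This requires care with the interaction between the $\max(\cdot,0)$ truncation in defect and the range of predefect values, especially distinguishing the $m\le n$ and $m>n$ cases; once that accounting is pinned down, the remainder is a direct reindexing of the sum over compositions.
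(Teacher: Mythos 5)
Your proposal is correct and follows essentially the same route as the paper's proof: decompose $\DPark_{m,n}$ into the fixed-content subspaces $V_\alpha$ via Lemma~\ref{lem:fixed_content_frob}, group the resulting $h_\alpha$ by the underlying partition $\lambda$, and convert the count of nondecreasing representatives with a given defect into defective Kreweras numbers by splitting into the $d>m-n$ regime (where the $\max(\cdot,0)$ truncation is inactive) and the low-defect regime. Your explicit attention to the boundary bookkeeping at $d=m-n$ and to the sign of the shift is if anything more careful than the paper's own write-up, so no gap to report.
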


\begin{proof} For $\alpha$ a weak composition of $m$, let \[\x(\alpha)=(1^{\alpha_1},2^{\alpha_2},\ldots).\] Moreover, let $\alpha \vDash_{j}k$ denote a weak composition of $k$ into $j$ parts. By \Cref{lem:fixed_content_frob}, we compute the graded Frobenius characteristic $\ch_t(\DPark_{m,n})$ as

    \begin{align*}
    \ch_t(\DPark_{m,n})&=\sum_{\alpha\,\vDash_{n+1}\,m}t^{\de(\x(\alpha))}h_\alpha\\
&=\sum_{\lambda\,\vdash\,m}
a_{\lambda,n}(t)h_\lambda\\
\intertext{where}
        a_{\lambda,n}(t)&=\sum_{
    \substack{\alpha\,\vDash_{n+1}\,m
\\\sort(\alpha)=\lambda}}t^{\de(\x(\alpha))}.
    \end{align*}
    
    Note that $a_{\lambda,n}(t)=\sum_{d\geq0}c_dt^d$ where for $d\geq 0$, $c_d$ is the number of nondecreasing tuples $\x\in[n+1]^m$ with multiplicities of entries given by $\lambda$ and $\de(\x)=d$. As $\de(\x)=\max_{j\in[m]}(x_j-j+(m-n),0)$, each tuple $\x$ with $\max_{j\in[m]}(x_j-j)\leq m-n$ contributes to $c_0$, hence \[c_0=\sum
    _{d=0}^{m-n}\Krew_{d,n}(\lambda).\]
    For $d>m-n$, we have that $c_d=\Krew_{d-(m-n),n}(\lambda)$.
\end{proof}

We conclude this section with results on interpreting and computing the defective Kreweras numbers $\Krew_{d,n}(\lambda)$. We recall that an $(n, m)$-lattice path is a path from $(0, 0)$ to $(n,m)$ consisting of $(0, 1)$ steps (called north steps and denoted by $N$) and $(1, 0)$ steps (called east steps and denoted by $E$). Such a path is typically denoted by a word with $m$ $N$'s and $n$ $E$'s. Given a lattice path $\w$, we let $n_i$ denote the position of the $i$th $N$ step in $\w$. The  preference list $\x$ corresponding to $\w$ is defined by
\begin{align}
    x_i = 1 + \left|\{j \in [m+n] \mid w_j = E, j < n_i\}\right|. \label{eq:pref_path_bij}
\end{align}
In other words, for each $i\in[m]$, the entry $x_i$ is one more than the number of east steps before the $i$th north step. For an $(n,m)$-lattice path $\w$, let $\dip(\w)$ be the vertical distance from the line $y=x$ to the lowest point of $\w$ on or below the line $y=x$ (where $\dip(\w)=0$ if no such point exists). Let $\runs(\w)\vdash m$ be the integer partition consisting of the sizes of runs of the north steps of $\w$.

\begin{example} In Figure~\ref{fig:picture} we illustrate the $(7,5)$-lattice path $\w=ENEENNENENEE$ corresponding to $(2, 4, 4, 5, 6)$ as well as its $\dip$ and $\runs$ statistics.
\begin{figure}[h]
    \centering
\begin{tikzpicture}
\draw[gray] (0,0) grid (7,5);
\draw[red, ultra thick](0,0)--(1,0)--(1,1)--(3,1)--(3,3)--(4,3)--(4,4)--(5,4)--(5,5)--(7,5);
\draw[black,dashed](0,0)--(5,5);
\end{tikzpicture}
    \caption{A $(7,5)$-lattice path with $
    \dip(\w)=2$ and 
    $\runs(\w)=(2,1,1,1)$.
}
    \label{fig:picture}
\end{figure}
\end{example}

\begin{proposition}
    Let $\lambda\vdash m$. The defective Kreweras number $\Krew_{d,n}(\lambda)$ is the number of $(n,m)$-lattice paths with $\dip(\w)=d$ and $\runs(\w)=\lambda$.
\end{proposition}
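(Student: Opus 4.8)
The plan is to exhibit a bijection between the objects counted by $\Krew_{d,n}(\lambda)$ and the $(n,m)$-lattice paths $\w$ with $\dip(\w)=d$ and $\runs(\w)=\lambda$, using the correspondence already set up in Equation~\eqref{eq:pref_path_bij}. Recall that $\Krew_{d,n}(\lambda)$ counts nondecreasing tuples $\x\in[n+1]^m$ whose sequence of multiplicities (read in increasing order of the distinct values) rearranges to $\lambda$ and for which $\max(\pde(\x),0)=d$. The map in Equation~\eqref{eq:pref_path_bij} sends an $(n,m)$-lattice path $\w$ to a nondecreasing preference list $\x$, since $x_i-1$ is the number of east steps preceding the $i$th north step, which is weakly increasing in $i$. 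First I would verify that this map is a bijection from $(n,m)$-lattice paths onto nondecreasing tuples in $[n+1]^m$: the inverse sends $\x$ to the lattice word in which, for each $i$, the $i$th north step is preceded by exactly $x_i-1$ east steps, and the total number of east steps is $n$ because the largest preference is at most $n+1$.

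Next I would match the two statistics under this bijection. For the runs statistic, consecutive north steps of $\w$ (a maximal run) correspond exactly to indices $i$ where $x_i$ is constant, because an east step between the $i$th and $(i+1)$st north steps is precisely what increments $x_{i+1}$ above $x_i$. Hence $\runs(\w)$ records the sizes of the maximal blocks of equal entries of $\x$, which is exactly the multiset of multiplicities of $\x$; so $\runs(\w)=\lambda$ if and only if $\x$ has content sorting to $\lambda$. For the dip statistic, I would translate the geometric definition into coordinates: after the $i$th north step the lattice path sits at height $i$ having taken $x_i-1$ east steps, so its horizontal coordinate is $x_i-1$ and the signed vertical distance below the diagonal $y=x$ at that lattice point is $(x_i-1)-i$. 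The lowest point of $\w$ relative to the diagonal is therefore governed by $\max_i\big((x_i-1)-i\big)$, and since $\dip$ measures the depth below the diagonal (and is $0$ when the path never dips below), we get $\dip(\w)=\max\big(\max_{i\in[m]}(x_i-i-1),0\big)$.

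The one subtlety, which I expect to be the main point requiring care, is reconciling this with the definition $\pde(\x)=\max_i(x_i-i)$ and hence $\max(\pde(\x),0)=d$. The apparent discrepancy is the shift by one: the deepest dip below the diagonal is attained along a horizontal stretch, and the correct bookkeeping is that $\dip(\w)$ equals $\max_i(x_i-i)$ rather than $\max_i(x_i-i-1)$, because the depth below the diagonal is measured at the lattice point reached \emph{just before} the $i$th north step (coordinates $(x_i-1,\,i-1)$), giving signed depth $(i-1)-(x_i-1)=i-x_i$ above the diagonal, equivalently depth $x_i-i$ below it. I would therefore carefully fix the convention for which lattice point realizes the maximal dip, confirm it yields $\dip(\w)=\max\big(\max_{i\in[m]}(x_i-i),\,0\big)=\max(\pde(\x),0)$, and conclude that $\dip(\w)=d$ if and only if $\max(\pde(\x),0)=d$.

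Combining the three facts, the bijection of Equation~\eqref{eq:pref_path_bij} restricts to a bijection between $(n,m)$-lattice paths with $\dip(\w)=d$ and $\runs(\w)=\lambda$ and the tuples enumerated by $\Krew_{d,n}(\lambda)$, which proves the proposition. The only genuine obstacle is the off-by-one in identifying the lattice point at which the dip is attained, and I would resolve it by an explicit check on a small example (such as the $(7,5)$-path in Figure~\ref{fig:picture}, where $\dip(\w)=2$ and $\x=(2,4,4,5,6)$ gives $\max_i(x_i-i)=\max(1,2,1,1,1)=2$) to pin down the correct convention before stating the general identity.
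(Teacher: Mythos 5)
Your argument is correct and essentially the same as the paper's proof: both use the correspondence of Equation~\eqref{eq:pref_path_bij}, identify maximal runs of north steps with maximal blocks of equal entries of $\x$ (so $\runs(\w)$ is the multiplicity partition of $\x$), and compute the depth below $y=x$ at the lattice point $(x_i-1,\,i-1)$ reached just before the $i$th north step to obtain $\dip(\w)=\max_{j\in[m]}(x_j-j,0)=\max(\pde(\x),0)$. The off-by-one you flag is resolved exactly as in the paper, so no further comparison is needed.
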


\begin{proof}
    Given an $(n,m)$-lattice path $\w$, let $\x$ be its corresponding nondecreasing preference list $\x\in[n+1]^m$ given by Equation~\ref{eq:pref_path_bij}. Note that each north step in a run of north steps has the same number of east steps before it. Hence, \[\mathrm{runs}(\w)=\sort(\mu_1(\x),\mu_2(\x),\ldots,\mu_{n+1}(\x))\] where $\mu_i(\x)$ is the number of times the number $i$ appears in the tuple $\x$, hence $\x$ is of the form given in \Cref{def:defkreweras}. 

    Note also that just before the $i$th north step, there have been $x_i-1$ east steps and $i-1$ north steps. If the point at the bottom of this north step is below the line $y=x$, its horizontal distance (and hence the vertical distance) to the line $y=x$ is given by $x_i-1-(i-1)=x_i-i$. Hence, $\dip(\w)=\max_{j\in [m]}(x_j-j,0)$.
\end{proof}

Next, we provide sufficient conditions for $\Krew_{d,n}(\lambda)$ to be zero.

\begin{proposition}
    Let $\lambda\vdash m$ have length $k$. \begin{enumerate}
    \item For $n<k-1$, we have that $\Krew_{d,n}(\lambda)=0$ for all $d\geq0$.
    \item For $n\geq k-1$, we have that $\Krew_{d,n}(\lambda)=0$ for all $d> m-k+1$.
    \end{enumerate}
\end{proposition}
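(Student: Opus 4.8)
The plan is to use the two descriptions of $\Krew_{d,n}(\lambda)$ available from its definition and from the preceding lattice-path characterization: it counts the nondecreasing content-$\lambda$ preference lists $\x=(a_1^{m_1},\ldots,a_k^{m_k})$ with $0<a_1<\cdots<a_k\le n+1$ and $(m_1,\ldots,m_k)$ a rearrangement of the parts of $\lambda$, equivalently the $(n,m)$-lattice paths $\w$ with $\runs(\w)=\lambda$ carrying the prescribed defect. Part (1) I would read off from a run count on the path, and part (2) from a direct maximization of the defect over content-$\lambda$ lists.

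For part (1): since $\runs(\w)=\lambda$ has length $k$, a contributing path has exactly $k$ maximal runs of north steps, and consecutive runs must be separated by at least one east step. Thus $\w$ uses at least $k-1$ east steps, while an $(n,m)$-lattice path has exactly $n$ of them; such a path exists only when $n\ge k-1$. Hence for $n<k-1$ the set counted by $\Krew_{d,n}(\lambda)$ is empty for every $d\ge 0$, so $\Krew_{d,n}(\lambda)=0$.

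For part (2): I would bound the defect of an arbitrary content-$\lambda$ list. Let $s_i=1+\sum_{l<i}m_l$ be the first index of the block of value $a_i$. Within that block $x_j=a_i$ is constant while $j$ increases, so the defect-sequence entry $\delta(\x)_j=x_j-j+(m-n)$ strictly decreases across the block and is largest at its start $s_i$. By \Cref{prop:defect2},
\[
\de(\x)=\max\Bigl(\max_{i\in[k]}\bigl(a_i-s_i+(m-n)\bigr),\,0\Bigr).
\]
Two elementary estimates then suffice: the $a_i$ are strictly increasing and at most $n+1$, so $a_i\le n+1-(k-i)$; and each block is nonempty, so $s_i=1+\sum_{l<i}m_l\ge i$. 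Substituting gives
\[
a_i-s_i+(m-n)\le\bigl(n+1-(k-i)\bigr)-i+(m-n)=m-k+1
\]
for every $i$, so $\de(\x)\le\max(m-k+1,0)=m-k+1$ (using $k\le m$). As every content-$\lambda$ list has defect at most $m-k+1$, we conclude $\Krew_{d,n}(\lambda)=0$ for all $d>m-k+1$.

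The step I expect to be the crux is the reduction of $\max_j\delta(\x)_j$ to the block-starts $s_i$, because it is exactly this that lets the two bounds $a_i\le n+1-(k-i)$ and $s_i\ge i$ combine so that both the slack between $a_i$ and $n+1$ and the particular multiplicities cancel, leaving the clean uniform value $m-k+1$. I would also verify sharpness: packing the distinct values at the top, $a_i=n+1-(k-i)$, forces the $i=1$ term to equal $m-k+1$ for any admissible multiplicities, so defect $m-k+1$ is attained and the threshold in part (2) cannot be lowered.
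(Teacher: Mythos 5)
Your proof is correct and takes essentially the same route as the paper: part (1) is the same impossibility of fitting $k$ runs (equivalently $k$ distinct values in $[n+1]$) when $n<k-1$, and part (2) rests on the same key reduction of $\delta(\x)$ to its values at the block starts, $b_i=a_i-s_i+(m-n)$. The only difference is cosmetic---you bound each $b_i\le m-k+1$ directly from $a_i\le n+1-(k-i)$ and $s_i\ge i$, whereas the paper first pushes the $a_i$ to their maximal values and then shows $b_{i+1}\le b_i$ by telescoping---and, like the paper's own proof, you bound $\de(\x)$ rather than the quantity $\max(\pde(\x),0)$ appearing in Definition~\ref{def:defkreweras}, a predefect-versus-defect discrepancy (immaterial when $m=n$) that is inherited from the paper rather than introduced by your argument.
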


\begin{proof}
    If $n<k-1$, then there can be no weak composition of length $n+1$ with $\sort(\alpha)=\lambda$. So $\Krew_{d,n}(\lambda)=0$ for all $d\geq 0$.

    Suppose instead that $n\geq k-1$. If $\sort(\alpha)=\lambda$, then $\x=\x(\alpha)$ must be of the form \[\x=({a_1}^{\lambda_{\sigma(1)}},{a_2}^{\lambda_{\sigma(2)}},\ldots,{a_k}^{\lambda_{\sigma(k)}})\] where $a_1<a_2<\cdots<a_k$ and $\sigma\in \mathfrak{S}_k$. In the run ${a_i}^{\lambda_{\sigma(i)}}$ of identical values, the largest contribution to $\delta$ is at the first of these entries. That is, if we set 
    \[b_i=a_i-\left(\sum_{k=1}^{i-1}\lambda_{\sigma(k)}\right)-1+(m-n)\] to be the value of $\delta(\x)$ at the first index of the $i$th run of identical values, we have that $\de(\x(\alpha))=\max(b_1,b_2,\ldots,b_k,0)$.

    Suppose that $\x$ has the maximum possible defect among tuples of this form. Note that increasing a value $a_i$ only increases the corresponding $b_i$, so we may assume that $a_1=n-k+2$, $a_2=n-k+3$, \ldots, $a_k=n+1$ are their highest possible values. Then, we compute 
    \begin{align*}
        b_{i+1}-b_i&=a_{i+1}-a_i-\lambda_{\sigma(i)}\\
        &<a_{i+1}-a_i\\
        &=1.
    \end{align*}
    Hence, $b_{i+1}\leq b_i$, meaning \[\de(\x)=\max(b_1,b_2,\ldots,b_k,0)=\max(b_1,0)=\max(m-k+1,0).\]

    Because $\x$ was assumed to have the maximum possible defect among tuples of this form, $\Krew_{d,n}(\lambda)=0$ for $d>m-k+1$.
\end{proof}

Finally, we have a conjectured formula for $\Krew_{d,n}(\lambda)$ when $n$ is sufficiently large that closely mirrors the formula for $\Krew(\lambda)$ (c.f.\ Equation~\ref{eq:kreweras_number}).

\begin{conjecture}\label{conj:defective Kreweras}
    For $n\geq d+m-1$ and $\lambda\vdash m$ of length $k$,
    \begin{align*}
        \Krew_{d,n}(\lambda)&=\\
        &\hspace{-.5in}\frac{m+dk}{m+d}\frac{1}{m+d+1}{m+d+1\choose m+d+1-k,\mu_1(\lambda),\mu_2(\lambda),\ldots,\mu_m(\lambda)}
    \end{align*} 
    where $\mu_i(\lambda)$ is the number of parts of $\lambda$ of size $i$.
\end{conjecture}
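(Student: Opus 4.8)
The plan is to prove the formula by sandwiching: for each $d$ I would count the number $N_{\le d}(\lambda)$ of nondecreasing tuples with run multiset $\lambda$ and predefect \emph{at most} $d$, and then recover $\Krew_{d,n}(\lambda)$ as the difference $N_{\le d}(\lambda)-N_{\le d-1}(\lambda)$. First I would record that the hypothesis $n\geq d+m-1$ is exactly what makes the bound $a_k\leq n+1$ in \Cref{def:defkreweras} irrelevant: a tuple of the prescribed form with $\max(\pde,0)=d$ has largest value $a_k\leq d+1+\sum_{j<k}\lambda_{\sigma(j)}\leq d+m$, so $\Krew_{d,n}(\lambda)$ is independent of $n$ throughout this range. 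It therefore suffices to count tuples $\x=(a_1^{\ell_1},\ldots,a_k^{\ell_k})$, over all orderings $(\ell_1,\ldots,\ell_k)$ of the parts of $\lambda$ and all $0<a_1<\cdots<a_k$, subject to $\max_i b_i=d$, where $b_i=a_i-1-\sum_{j<i}\ell_j$ is the predefect at the start of the $i$th run (the maximum of $x_j-j$ is always attained at a run start). Write $N_{\le d}(\lambda)$ for the analogous count with $\max_i b_i\leq d$.

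The central step is a cycle-lemma evaluation of $N_{\le d}(\lambda)$. I would introduce words of length $m+d+1$ on the alphabet consisting of $m+d+1-k$ copies of a \emph{filler} letter $F$ together with, for each $i$, exactly $\mu_i(\lambda)$ copies of a letter $R_i$, and assign the valuation $v(F)=1$ and $v(R_i)=1-i$. Every letter then has value at most $1$, and the total value is $\sum_i\mu_i(\lambda)(1-i)+(m+d+1-k)=d+1>0$. I would construct a bijection between the configurations counted by $N_{\le d}(\lambda)$ and those words all of whose partial sums are positive, matching $R_i$ with a run of size $i$ and encoding the depth budget $d$ so that the requirement $b_i\leq d$ for every run becomes positivity of every partial sum. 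By the Dvoretzky--Motzkin cycle lemma, a sequence of $m+d+1$ integers each at most $1$ with total value $d+1$ has exactly $d+1$ cyclic rotations with all partial sums positive, so the number of positive words is $\frac{d+1}{m+d+1}$ times the total number of words. This gives
\[
N_{\le d}(\lambda)=\frac{d+1}{m+d+1}\binom{m+d+1}{m+d+1-k,\mu_1(\lambda),\ldots,\mu_m(\lambda)},
\]
where $N_{\le -1}(\lambda)=0$ and $N_{\le 0}(\lambda)=\Krew(\lambda)$ as in \eqref{eq:kreweras_number}.

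Finally I would compute $\Krew_{d,n}(\lambda)=N_{\le d}(\lambda)-N_{\le d-1}(\lambda)$. Placing both terms over the common denominator $(m+d+1-k)!\prod_i\mu_i(\lambda)!$ and factoring out $(m+d-1)!$, the surviving bracket simplifies via $(d+1)(m+d)-d(m+d+1-k)=m+dk$, so the difference equals $(m+dk)\tfrac{(m+d-1)!}{(m+d+1-k)!\prod_i\mu_i(\lambda)!}$, which is precisely $\frac{m+dk}{m+d}\cdot\frac{1}{m+d+1}\binom{m+d+1}{m+d+1-k,\mu_1(\lambda),\ldots,\mu_m(\lambda)}$. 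The case $d=0$ recovers $N_{\le 0}(\lambda)=\Krew(\lambda)$, matching the classical Kreweras number.

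The hard part will be the bijection of the second paragraph. Two points need care: first, the letter-to-run correspondence must preserve the run multiset $\lambda$, so adjacent runs cannot be allowed to merge -- this forces a finer reading (each letter contributing one up-step followed by its down-steps, $R_i\mapsto UD^i$ and $F\mapsto U$, rather than a naive north/east reading that would fuse consecutive runs); second, and more delicately, one must verify that ``predefect $\leq d$'' is genuinely equivalent to ``all partial sums positive.'' This equivalence runs through a reversal-and-complementation bookkeeping in which partial sums track $d+1$ minus the current depth below the diagonal, and pinning down the off-by-one behavior at the two endpoints -- so that the map lands bijectively onto \emph{all} positive words -- is where the real work lies. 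Once the bijection is established, the $n$-independence, the cycle-lemma count, and the telescoping are all routine.
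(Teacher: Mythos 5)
First, a point of order: the paper does not prove this statement. It is stated as \Cref{conj:defective Kreweras} and explicitly posed as an open problem in the Future Work section, so there is no proof of record to compare yours against; I can only assess your proposal on its own terms. The scaffolding of your argument checks out. The hypothesis $n\geq d+m-1$ does exactly what you say (a qualifying tuple has $a_k\leq d+1+\sum_{j<k}\lambda_{\sigma(j)}\leq d+m\leq n+1$, so the cap $a_k\leq n+1$ is inactive); the telescoping identity $\Krew_{d,n}(\lambda)=N_{\le d}(\lambda)-N_{\le d-1}(\lambda)$ is legitimate because $\pde(\x)\geq x_1-1\geq 0$ forces $N_{\le -1}=0$; your closed form for $N_{\le d}(\lambda)$ reduces to Equation~\eqref{eq:kreweras_number} at $d=0$ and is consistent with the paper's worked example $\lambda=(2,1)$, where it gives $N_{\le 0}=3$, $N_{\le 1}=8$, $N_{\le 2}=15$ and hence $\Krew_{0}=3$, $\Krew_1=5$, and $7$ for the $d=2$ count once $n\geq 4$ (a direct enumeration confirms $7$); and the final simplification via $(d+1)(m+d)-d(m+d+1-k)=m+dk$ is correct arithmetic.

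The one genuine gap is the one you flag yourself: the bijection onto positive words is described but not constructed, and without it the cycle-lemma count proves nothing about $N_{\le d}(\lambda)$. It can be closed, and more cleanly than the $R_i\mapsto UD^i$ reading suggests. Encode a configuration $\x$ by its occurrence vector $(y_1,\ldots,y_{m+d})$ with $y_t=|\{j: x_j=t\}|$; the run condition says exactly $k$ of the $y_t$ are nonzero and form the multiset $\lambda$, while $\pde(\x)\le d$ is equivalent to $\sum_{t\le t_0}y_t\ge t_0-d$ for all $t_0$. Now set
\[
w \;=\; F\,w'_{m+d}\,w'_{m+d-1}\cdots w'_1,\qquad w'_t=\begin{cases}F,& y_t=0,\\ R_{y_t},& y_t>0,\end{cases}
\]
which uses precisely your alphabet ($m+d+1-k$ copies of $F$ and $\mu_i(\lambda)$ copies of $R_i$). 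A direct computation gives that the partial sum of $w$ after position $1+r$ equals $1+d-t_0+\sum_{t\le t_0}y_t$ with $t_0=m+d-r$, so positivity of all partial sums is exactly the condition $\pde(\x)\le d$ (the first partial sum is automatically $1$). For surjectivity onto \emph{all} positive words, note that any positive word must begin with $F$, since a leading $R_i$ gives first partial sum $1-i\le 0$; this disposes of the endpoint issue you were worried about. With that bijection in hand, the Dvoretzky--Motzkin count, the $n$-independence, and the telescoping complete the proof, so your plan does appear to settle the conjecture once this step is written out in full.
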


\section{Main Bijection}\label{sec:main bijection}
In this section, we prove Theorem~\ref{thm:general}  and Corollary~\ref{cor:counting} through a chain of bijective arguments. 
To begin we consider the case where there are the same number of cars as numbered parking spots on the street.

\subsection{Equal Number of Cars and Spots}
Throughout this section we let $m=n$, in which case $d\geq m-n=0$.
Before proceeding, we point the reader to \Cref{fig:inductive-diagram}, which provides a visual of the bijections we define and the inductive argument we undergo in this section. 

Once we establish the bijections $\varphi$ and $\psi$, we show in the proof of \Cref{thm:main-bij} that given a bijection $\rho_i$ satisfying certain properties, we can lift it to a bijection $\rho_{i+1}$ satisfying the same properties. Repeating this process until $\rho_d$, we obtain a bijection between the set of nondecreasing defective $(m,n)$-parking functions with defect $d$ whose last entry is 
less than or equal to $k\in[n]$, which we denote  by 
$\NDDPF_{n,n,d}(x_n\leq k)$, 
and the set of nondecreasing parking functions of length $n+d$ whose last entry is less than or equal to $k-d$, which we denote by $\NDPF_{n+d,n+d}(x_{n+d}\leq k-d)$.

\begin{figure}[h!]
    \centering
\[\begin{tikzcd}
	{\NDDPF_{n,n,i+1}(x_n \leq k-d+i+1)} && {\NDPF_{n+i+1,n+i+1}(x_{n+i+1} \leq k-d)} \\
	\\
	{N_{n,i}(x_n\leq k-d+i)} && {M_{n+i}(x_{n+i}\leq k-d)} \\
	\\
	{\NDDPF_{n,n,i}(x_n \leq k-d+i)} && {\NDPF_{n+i,n+i}(x_{n+i} \leq k-d)}
	\arrow["{\rho_{i+1}}", dashed, from=1-1, to=1-3]
	\arrow["\varphi"', tail reversed, from=1-1, to=3-1]
	\arrow["{\text{(\Cref{lem:dpf-bij})}}", draw=none, from=1-1, to=3-1]
	\arrow["\psi"', tail reversed, from=1-3, to=3-3]
	\arrow["{\text{(\Cref{lem:ndpf-bij})}}", draw=none, from=1-3, to=3-3]
	\arrow["{\overline\rho_i}", dashed, from=3-1, to=3-3]
	\arrow["\pi"', two heads, from=3-1, to=5-1]
	\arrow["\pi"', two heads, from=3-3, to=5-3]
	\arrow["\rho_i", from=5-1, to=5-3]
\end{tikzcd}\]
    \caption{Illustration depicting how we use the bijections defined in this section to inductively establish \Cref{thm:main-bij}. The maps $\pi$ denote projection to the first coordinate.}
    \label{fig:inductive-diagram}
\end{figure}

We begin by introducing some additional definitions and notation that assist in brevity.

\begin{definition}\label{def:decset}
    Given a defective parking function $\x \in \NDDPF_{n, n, d}$, the \textit{decrement set} of $\x$, denoted $D(\x) \subseteq [n]$, is the set of indices $i\in[n]$ where $\delta(\x)_i \geq 0$ and there does not exists an integer $j < i$ with $\delta(\x)_j > 0$.
\end{definition}

\begin{example}
    The defective parking function $\x = (1, 1, 3, 6, 6, 6) \in \NDDPF_{6, 6,2}$ satisfies
$    \delta(\x)=(0,-1,0,2,1,0)$ and the corresponding decrement set is $D(\x) = \{1, 3, 4\}$.
\end{example}

The decrement set of a preference list plays a key role in establishing some needed bijections. Part of these bijections involve decrementing parking functions at certain indices - the decrement set consists of all of the indices which could have been decremented to arrive at the current parking function. A key component of our argument is showing that the decrement set of a defective parking function corresponds to the fixed point set of a non-defective parking function.

\begin{definition}
    Given $n\geq 1$ and  $d\geq 0$, the \emph{\pairset}
    $N_{n, d}$ is defined by
    \[N_{n, d} = \{(\x, i) \mid \x\in \NDDPF_{n,n,d}\mbox{ and } i \in D(\x)\}.\]
\end{definition}

\begin{example}
The pair set $N_{3,2}$ consists of six tuples $(\x,i)$ with $\x\in\NDDPF_{3,3,2}$ and $i\in D(\x)$. Table~\ref{tab:example n=m=3, d=2} provides the elements of $\NDDPF_{3,3,2}$, $\delta(\x)$, and the decrement sets $D(\x)$.
\begin{table}[ht]
    \centering

    \begin{tabular}{|c||c|c|c|c|c|}\hline
        $\x\in\NDDPF_{3,3,2}$ &$(1,4,4)$&$(2,4,4)$&$(3,3,3)$&$(3,3,4)$&$(3,4,4)$\\\hline
$\delta(\x)$			   &$(0,2,1)$&$(1,2,1)$&$(2,1,0)$&$(2,1,1)$&$(2,2,1)$	\\\hline
        $D(\x)$  			   &$\{1,2\}$ &$\{1\}$		&$\{1\}$&$\{1\}$&$\{1\}$		\\\hline
    \end{tabular}

    \caption{Elements of $\NDDPF_{3,3,2}$, $\delta(\x)$, and the decrement sets $D(\x)$.}
    \label{tab:example n=m=3, d=2}
\end{table}
\end{example}

From Remark~\ref{notation}, we recall that whenever we write $S(P)$, we are considering the restriction of the set $S$ with condition described by $P$. In this way, the set
$\NDDPF_{n,n,d}(x_n \leq k)$ denotes the set of elements $\x=(x_1,x_2,\ldots, x_n)\in \NDDPF_{n,n,d}$ with last entry less than or equal to $k$, and the set 
$N_{n,d-1}(x'_n \leq k-1)$ denotes the set of pairs $(\x', i) = ((x_1',x_2',\ldots,x_n'), i)\in N_{n,d-1}$ with $x_n'$ less than or equal to $k-1$.

We now establish some preliminary bijections.

\begin{lemma}\label{lem:dpf-bij}
    Let $k \in [n+1]$, $d > 0$, and $\x=(x_1,x_2,\ldots,x_n)\in \NDDPF_{n,n,d}(x_n \leq k)$.
    Define the  map \[\varphi: \NDDPF_{n,n,d}(x_n \leq k) \to N_{n,d-1}(x'_n \leq k-1)\]
    by 
    \[\varphi(\x )=(\x',i),\] 
    where 
    $\x'=
(x_1,x_2,\ldots,x_{i-1},x_{i}-1,x_{i+1}-1,\ldots,x_n-1)$ and $i\in[n]$ denotes the first index such that $\delta(\x)_i > 0$.
Then 
$\varphi$ is a bijection.
Moreover, 
    $D(\x) = \{j \in D(\x') \mid j \leq i\}$.
\end{lemma}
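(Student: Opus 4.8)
The plan is to establish the three assertions in turn: that $\varphi$ is well-defined (i.e.\ lands in $N_{n,d-1}(x'_n\leq k-1)$), that the decrement-set identity $D(\x)=\{j\in D(\x')\mid j\leq i\}$ holds, and that $\varphi$ is a bijection by exhibiting an explicit inverse. Throughout I use that $m=n$ forces $\delta(\x)_j=x_j-j$, so $\de(\x)=\max_{j}(\delta(\x)_j,0)$, together with the observation (immediate from \Cref{def:decset}) that $D(\x)=\{j\leq i : \delta(\x)_j\geq 0\}$, where $i$ is the first index with $\delta(\x)_i>0$.

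First I would check well-definedness. Since $d>0$, the maximum of $\delta(\x)$ is positive, so the index $i$ exists. To see $\x'$ is nondecreasing, note the only possible violation is at the junction, where one needs $x_{i-1}\leq x_i-1$; but $i$ being the first positive index gives $\delta(\x)_{i-1}\leq 0$, i.e.\ $x_{i-1}\leq i-1$, while $\delta(\x)_i>0$ gives $x_i\geq i+1$, so $x_{i-1}<x_i$. The same inequality $x_i\geq i+1\geq 2$ shows $x_i-1\geq 1$, and $x'_n=x_n-1\leq k-1\leq n$ keeps every entry in $[n+1]$ and supplies the codomain constraint. For the defect I would compute $\delta(\x')_j=\delta(\x)_j$ for $j<i$ (all $\leq 0$) and $\delta(\x')_j=\delta(\x)_j-1$ for $j\geq i$; since the maximum value $d$ of $\delta(\x)$ is attained at some index $\geq i$, it follows that $\max_j\delta(\x')_j=d-1\geq 0$, whence $\de(\x')=d-1$. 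Finally $\delta(\x')_i=\delta(\x)_i-1\geq 0$ while $\delta(\x')_j\leq 0$ for $j<i$, so $i\in D(\x')$ and $(\x',i)\in N_{n,d-1}(x'_n\leq k-1)$.

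For the decrement-set identity I would use the description of $D$ recalled above. Because $\delta(\x')_j\leq 0$ for $j<i$ and $\delta(\x')_i\geq 0$, the first positive index of $\x'$ is some $i'\geq i$; hence for $j\leq i$, membership in $D(\x')$ is equivalent to $\delta(\x')_j\geq 0$. Splitting into $j<i$ (where $\delta(\x')_j=\delta(\x)_j$) and $j=i$ (where $\delta(\x')_i\geq 0$ always holds) matches exactly the description of $D(\x)$, giving $D(\x)=\{j\in D(\x')\mid j\leq i\}$.

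The bijection is then settled by the inverse $\varphi^{-1}(\x',i)=(x'_1,\ldots,x'_{i-1},x'_i+1,\ldots,x'_n+1)$. Here $x'_n\leq k-1\leq n$ guarantees every incremented entry stays $\leq n+1$, monotonicity survives the junction since $\x'$ is nondecreasing, and reversing the defect computation yields defect $d$; crucially $\delta$ becomes positive for the first time exactly at index $i$ (it is $\leq 0$ before $i$ and equals $\delta(\x')_i+1\geq 1$ at $i$), so applying $\varphi$ recovers this same $i$. I expect the defect bookkeeping---tracking where the maximum of $\delta$ is attained and correctly handling the truncation $\max(\cdot,0)$, especially in the boundary case $d=1$ where $\de(\x')=0$---to be the main technical point; the remaining verifications are direct. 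Confirming that $\varphi\circ\varphi^{-1}$ and $\varphi^{-1}\circ\varphi$ are identities then completes the proof.
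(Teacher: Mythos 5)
Your proposal is correct and follows essentially the same route as the paper's proof: the same junction inequality $x_{i-1}\leq i-1<i\leq x_i-1$ for monotonicity, the same observation that $\delta$ is unchanged before index $i$ and drops by one from $i$ onward, and the same increment map as the inverse (which the paper phrases as separate injectivity and surjectivity checks rather than an explicit two-sided inverse). Your handling of the $d=1$ boundary and the reduction of the decrement-set identity to the description $D(\x)=\{j\leq i:\delta(\x)_j\geq 0\}$ are both sound and match the paper's containment argument in substance.
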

\begin{proof}
    Given $\x=(x_1,x_2,\ldots,x_n) \in \NDDPF_{n,n,d}$, let $i$ denote $\max(D(\x))$, the first index such that $\delta(\x)_i > 0$. Such an index existing follows from \Cref{prop:defect2} and the fact that $\x$ has positive defect. Then choose the map $\varphi$ with
    \[\varphi: \x \mapsto ((x_1,x_2,\ldots,x_{i-1},x_{i}-1,x_{i+1}-1,\ldots,x_n-1),i).\]
    Let $\x'=(x'_1,x'_2,\dots,x'_n)=(x_1,x_2,\ldots,x_{i-1},x_{i}-1,x_{i+1}-1,\ldots,x_n-1)$, be the first component of $\varphi(\x)$.

    \begin{itemize}[leftmargin=.2in]
        \item Well-defined: Since $i$ is the smallest index such that 
        $\delta(\x)_i>0$, this implies that $x_i-i>0$ and $x_{i-1}-(i-1)\leq 0$.
        Hence $x_i - 1 > i - 1$ and $x_{i-1} \leq i-1$. Thus $\x'$ is  still nondecreasing. Also, every index $j$ with $\delta(\x)_j > 0$ has $j \geq i$ by definition, so the $j$th entry of $\x'$ satisfies $x'_j=x_j - 1$. 
        Therefore, by \Cref{prop:defect2} the defect of $\x'$ satisfies \[\max_{1 \leq j \leq n}(\delta(\x')_j) = \max_{1 \leq j \leq n}(\delta(\x)_j) - 1 = d-1.\]
        This establishes that $\varphi$ is well-defined on the first component. 
        Also, because $\delta(\x)_i > 0$ and  $x'_i=x_i-1$, we know $\delta(\x')_i \geq 0$. If there were some $j < i$ with $\delta(\x')_j > 0$, then we would also have $\delta(\x)_j > 0$, as $x'_j=x_j$ for all $j\in[i-1]$. 
        Contradicting the definition of $i$. Hence $i \in D(\x')$ and $\varphi$ is well-defined on the second component.

        \item Injective: Suppose $\x=(x_1,x_2,\ldots,x_n), \y=(y_1,y_2,\ldots,y_n) \in \NDDPF_{n,n,d}$ and assume that $\varphi(\x)=\varphi(\y)$. Then the first coordinates are equal, i.e. $\x'=\y'$. This implies that $x'_j = y'_j$ for all $1 \leq j \leq n$. 
This together with the definition of $\varphi$ implies that
\begin{itemize}
\item if $j < i$, then $y_j'=x'_j=x_j$, and 
\item if  $j \geq i$, then $y_j'=x'_j=x_j-1$.
\end{itemize}

        This implies that $\x = \y$.
        \item Surjective: Suppose $(\x', i) \in N_{n,d-1}$. Let 
        \[\x = (x'_1, x'_2, \ldots, x'_{i-1}, x'_i + 1, x'_{i+1}+1,\ldots, x'_n+1).\]
        Then from the definition of $\varphi$, $\varphi(\x) = (\x', i)$. By proposition \Cref{prop:defect2} $d - 1 = \max_{j \in [m]}(\delta(\x')_j)$. Let $j$ denote an index where $\delta(\x')_j = d-1$.
        From the definition of $D(\x')$, $i \leq j$. Then $\delta(\x)_j = d$ and this is maximal, so $\x$ has defect $d$. 
        Finally, because $x'_n \leq k-1$, $x_n \leq k$, so $\x \in \NDDPF_{n,n,d}(x_n \leq k)$ as desired.
    \end{itemize}
        Therefore, $\varphi$ is a bijection.

To conclude we prove that $D(\x) = \{j \in D(\x') \mid j \leq i\}$ by establishing two set containments:
    \begin{itemize}[leftmargin=.2in]
        \item $D(\x) \subseteq \{j \in D(\x') \mid j \leq i\}$:  Suppose $j \in D(\x)$. Because $i = \max(D(\x))$, $j \leq i$. Hence, for all $\ell < j$ we know $x_\ell = x'_\ell$, so $\delta(\x')_\ell \leq 0$. \begin{itemize}
            \item If $j < i$, then $x'_j = x_j$, so $\delta(\x')_j = 0$. 
            \item If $j = i$, then $\delta(\x)_j > 0$, so $\delta(\x')_j \geq 0$. Then $j \in D(\x')$ and $j \leq i$. Thus $j \in \{j \in D(\x') \mid j \leq i\}$, as desired.
        \end{itemize}
        \item $\{j \in D(\x') \mid j \leq i\}\subseteq D(\x) $: Suppose $j \in \{j \in D(\x') \mid j \leq i\}$. Again, because $j \leq i$ we know that $x_\ell = x'_\ell$ for all $\ell < j$. Then $\delta(\x)_\ell \leq 0$. 
        \begin{itemize}
            \item If $j < i$, then $x_j = x'_j$. Hence, $\delta(\x')_j = 0$.
            \item If $j = i$, then $\delta(\x')_j \geq 0$, so $\delta(\x)_j > 0$.
        \end{itemize}  Thus $j \in D(\x)$, as desired.\qedhere
    \end{itemize}
\end{proof}

\begin{example}
    Let $\x=(1,1,3,5,7,6)\in\NDDPF_{6,6,2}$. Note that $\delta(\x)=(0,-1,0,1,2,0)$ and the first index such that $\delta(\x)_i>0$ is $i=4$. We then compute \begin{align*}
        \varphi(\x)&=((1,1,3,5-1,7-1,6-1),4)\\
        &=((1,1,3,4,6,5),4).
    \end{align*}
    For $\x'=(1,1,3,4,6,5)$ we have  $\delta(\x')=(0,-1,0,0,1,-1)$, meaning $\x'\in\NDDPF_{6,6,1}$ and $4\in D(\x')$.
\end{example}

\begin{example} \Cref{tab:phi_example} provides an example of the map $\varphi$ as a bijection between the sets $\NDDPF_{3,3,2}$ and $N_{3,1}$. Note that if the last entry of $\x$ is bounded by $k$, then the last entry of its image is indeed bounded by $k-1$.

\begin{table}[ht]
    \centering
\resizebox{\textwidth}{!}{
    \begin{tabular}{|c||c|c|c|c|c|}\hline
        $\x\in\NDDPF_{3,3,2}$ &$(1,4,4)$&$(2,4,4)$&$(3,3,3)$&$(3,3,4)$&$(3,4,4)$\\\hline
$\varphi(\x)\in N_{3,1}$			   &$((1,3,3),2)$&$((1,3,3),1)$&$((2,2,2),1)$&$((2,2,3),1)$&$((2,3,3),1)$	\\\hline
    \end{tabular}
}

    \caption{Elements of $\NDDPF_{3,3,2}$ and their images $\varphi(\x)\in N_{3,1}$.}
    \label{tab:phi_example}
\end{table}
\end{example}

We now introduce the following definitions in order to establish our next bijection.

\begin{definition}\label{def:fixedset}
    Given $\x \in \NDPF_{n,n}$, the \textit{fixed set} of $\x$ is
    \[F(\x) = \{i \in [n] \mid \delta(\x)_i = 0\}.\]
\end{definition}
Since $m = n$, $i\in F(\x)$ is equivalent to saying $x_{i} = i$.
\begin{example}
    The fixed set of the parking function $\x =( 1,1,2,3,5) \in \NDPF_{5,5}$ satisfies $\delta(\x)=
    (0,-1,-1,-1,0)
    $. Hence, $F(\x) = \{1, 5\}$.
\end{example}

\begin{definition}
    The \emph{\setM} is defined by
    \[M_{n} = \{(\x, i) \mid \x \in \NDPF_{n,n}\mbox{ and } i \in F(\x)\}.\]
\end{definition}

\begin{lemma}\label{lem:ndpf-bij}
    Let $k \in [n]$ and $\x=(x_1,x_2,\ldots,x_{n+1})\in \NDPF_{n+1,n+1}(x_n \leq k)$.
    Define the  map \[\psi: \NDPF_{n+1,n+1}(x_{n+1}\leq k) \to M_{n}(x_n \leq k)\]
    by 
    \[\psi(\x )=(\x',i),\] 
    where 
    $i = \max(F(\x))$ and
    \[\x' = (x_1,x_2,\ldots,x_{i-1},x_{i+1},x_{i+2},\ldots,x_{n+1}).\]
Then 
$\psi$ is a bijection.
Moreover, 
    $F(\x') = \{j \in F(\x) \mid j \leq i\}$.

\end{lemma}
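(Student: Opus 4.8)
The plan is to show $\psi$ is a bijection by exhibiting an explicit two-sided inverse, in exact parallel with the proof of \Cref{lem:dpf-bij}. The key structural observation is that since the number of cars equals the number of spots, for $\x\in\NDPF_{n+1,n+1}$ we have $\delta(\x)_j=x_j-j\le 0$ for every $j$, so $j\in F(\x)$ precisely when $x_j=j$; geometrically these are the indices where the associated path touches the diagonal. Choosing $i=\max(F(\x))$ to be the \emph{last} touch point is what controls the tail of $\x$: for every $\ell>i$ we must have $x_\ell<\ell$, i.e.\ $x_\ell\le \ell-1$. This single inequality drives every verification below.

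First I would check $\psi$ is well-defined, i.e.\ that $(\x',i)\in M_n(x_n\le k)$. Deleting one entry of a nondecreasing tuple keeps it nondecreasing, so I only need the parking inequalities for $\x'$. For $j<i$ we have $x'_j=x_j\le j$ since $\x$ is a parking function, while for $j\ge i$ the shifted entry $x'_j=x_{j+1}$ satisfies $x_{j+1}\le (j+1)-1=j$ exactly because $j+1>i=\max(F(\x))$; hence $\x'\in\NDPF_{n,n}$. To see $i\in F(\x')$, note $x'_i=x_{i+1}$, and combine $x_{i+1}\ge x_i=i$ (nondecreasing, with $i\in F(\x)$) against $x_{i+1}\le i$ (the tail bound) to get $x'_i=i$. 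The last-entry constraint transfers since $x'_n=x_{n+1}\le k$.

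Next I would construct the inverse $\Psi\colon (\x',i)\mapsto (x'_1,\ldots,x'_{i-1},\,i,\,x'_i,\ldots,x'_n)$, which inserts the value $i$ at position $i$. I would verify the output is a nondecreasing parking function of length $n+1$: the parking inequalities are immediate for positions $<i$, hold with equality at position $i$, and for positions $j>i$ reduce to $x'_{j-1}\le j-1<j$ because $\x'\in\NDPF_{n,n}$. These same strict inequalities show that no index exceeding $i$ can be a fixed point, so $\max(F(\Psi(\x',i)))=i$; together with the equalities $\psi\circ\Psi=\mathrm{id}$ and $\Psi\circ\psi=\mathrm{id}$ (both read off directly from the insertion/deletion at position $i$), this proves $\psi$ is a bijection, and $x_{n+1}=x'_n\le k$ is preserved.

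Finally, for the fixed-set statement I would compare $F(\x)$ and $F(\x')$ index by index: for $j<i$, $x_j=x'_j$ gives $j\in F(\x)$ if and only if $j\in F(\x')$; the index $i$ lies in both $F(\x)$ and $F(\x')$; and for $j>i$ the tail bound $x_j\le j-1$ shows $j\notin F(\x)$. This yields $F(\x)=\{j\in F(\x')\mid j\le i\}$, matching the relation needed to feed the inductive ladder of \Cref{fig:inductive-diagram}. The main obstacle is conceptual rather than computational: one must recognize that selecting the \emph{maximal} touch point $i$ is precisely what forces the deleted (resp.\ inserted) coordinate to equal $i$ and what guarantees that no fixed points are created beyond $i$, so that deletion and insertion are genuinely mutually inverse and respect the fixed-set grading.
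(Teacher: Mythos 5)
Your proposal is correct and follows essentially the same route as the paper: delete/insert the entry at the last fixed point $i=\max(F(\x))$, using the tail bound $x_\ell<\ell$ for $\ell>i$ to verify well-definedness, the two-sided inverse, and the fixed-set identity. The one detail you leave implicit is that $i<n+1$ (needed for $x_{i+1}$ to exist and for $i$ to be a valid index of the length-$n$ tuple $\x'$), which is exactly where the hypothesis $k\le n$ enters: $x_{n+1}\le k\le n$ forces $n+1\notin F(\x)$.
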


\begin{proof} Fix $\x=(x_1,x_2,\ldots,x_{n+1}) \in \NDPF_{n+1,n+1}(x_{n+1} \leq k)$ and  let $i=\max(F(\x))$. Let \[\x'=(x'_1,x'_2,\dots,x'_n)=(x_1,x_2,\ldots,x_{i-1},x_{i+1},x_{i+2},\ldots,x_{n+1}),\] be the first component of $\psi(\x)$.

    \begin{itemize}[leftmargin=.2in]
        \item Well-defined: 
        Since $\x$ is a nondecreasing parking function, we know that $x_i\leq i$ for all $i\in[n+1]$.
        Moreover, since $i=\max(F(\x))$, we know that $x_\ell<\ell$ for all $i<\ell\leq n+1$. 
        Now removing $x_i$ from $\x$ to construct $\x'$ ensures that $\x'$
        satisfies the following: 
        \begin{itemize}
            \item If $1\leq j < i$, then $x'_j = x_j\leq j$, and 
            \item if $i\leq j\leq n$, then $x'_j = x_{j+1}< j+1$, so $x_j'\leq j$.
        \end{itemize}
        The above two condition ensure that $\x'\in\NDPF_{n,n}$. 
        By assumption $x_{n+1}\leq k$, so $x'_n=x_{n+1}\leq k$, which implies that $\x'\in\NDPF_{n,n}(x_n'\leq k)$, as desired. 
        Then $\psi$ is well-defined on the first component. 
        
        Because $k\leq n$, we have that $x_{n+1} < n+1$ is not a fixed point of $\x$, so $i < n+1$. As $i=\max(F(\x))$, we know $x_{i+1}$ is not a fixed point of $\x$, so $x_{i+1} < i+1$. Hence, \begin{equation}\label{eq:fp-stays}i = x_i \leq x_{i+1} < i+1,\end{equation}
        so $x'_i = x_{i+1} = i$. This implies that $i$ is in $F(\x')$, so $\psi$ is well-defined on the second component.
    
        \item Surjective: Suppose $(\x', i) \in M_n$, where $\x'=(x_1',x_2',\ldots,x_n')\in\NDPF_{n,n}$ with $x_n'\leq k$. Then let
        \[\x =(x_1,x_2,\ldots,x_{n+1})= (x'_1, x'_2, \ldots, x'_{i-1}, i, x'_{i}, x'_{i+1}, \ldots, x'_{n}).\]
        We show that $\x\in\NDPF_{n+1,n+1}(x_{n+1}\leq k)$ and $\psi(\x) = (\x', i)$. We proceed by showing that $x_j\leq j$ for all $j\in[n+1]$.
        In the case that $1\leq j < i$ we have $x_j = x'_j \leq j$. 
        When $j = i$ we have $x_i = i$.
        When $i<j\leq n+1$ we have $x_j = x'_{j-1} \leq j-1 \leq j$.
        Thus $x_j\leq j$ for all $j\in[n+1]$, so $\x$ is a parking function. Because $x'_i = i$, we have
        \[x_{i-1} < x_i = x_{i+1},\]
        so $\x$ is nondecreasing. Then $\x \in \NDPF_{n+1,n+1}$. Because $x_j = x'_{j-1} \leq j-1 < j$, for $j>i$, $i$ must be the index of the last fixed point of $\x$, so $\psi(\x) = (\x', i)$ as desired.

        \item Injective: 
         Suppose $\x=(x_1,x_2,\ldots,x_{n+1}), \y=(y_1,y_2,\ldots,y_{n+1}) \in \NDPF_{n+1,n+1}$ and assume that $\psi(\x)=\psi(\y)$. 
         This implies that  $\max(F(\x))=\max(F(\y))$, which we denote by $i$. 
         Moreover, the first coordinates are equal, i.e. $\x'=\y'$, which implies that $x'_j = y'_j$ for all $j\in[n+1]\setminus\{i\}$. By the definition of $\psi$, this implies that $x_j = y_j$ for all $j\in[n+1]\setminus\{i\}$. Then as $i=\max(F(\x))=\max(F(\y))$, we have that $x_i=i=y_i$. Thus $\x=\y$, and $\psi$ is injective.
    \end{itemize}
    Hence, $\psi$ is a bijection. 
    
    To conclude, we prove that $F(\x) = \{j \in F(\x') \mid j \leq i\}$ by establishing two set containments:
    \begin{itemize}[leftmargin=.2in]
        \item $F(\x) \subseteq \{j \in F(\x') \mid j \leq i\}$: Suppose $j \in F(\x)$. Recall $i=\max(F(\x))$. Hence,
        \[x'_j = \begin{cases}
            x_j & \mbox{if }1\leq j < i\\
            x_{j+1} & \mbox{if }j=i.
        \end{cases}\]
        If $1\leq j < i$, then  $x'_j =x_j < j$. 
        If $j = i$, then from equation \eqref{eq:fp-stays} we know that $x_{j+1} = j$, so $j \in F(\x')$. Thus $j\in\{F(\x')\mid j\leq i\}$.
        \item $ \{j \in F(\x') \mid j \leq i\}\subseteq F(\x)$: Suppose $j \in F(\x')$ and $1\leq j \leq i$. Then we have
        \[x_j = \begin{cases}
            x'_j &\mbox{if } 1\leq j < i\\
            i & \mbox{if }j=i.
        \end{cases}\]
        Hence, if $1\leq j< i$, then $\delta(\x)_j=x_j-j=x'_j-j=0$ and $j\in F(\x)$. If $j=i$, then $\delta(\x)_j=x_j-j=i-i=0$ and so $j\in F(\x)$. 
        In both cases we get that         $j \in F(\x)$, as desired.\qedhere
    \end{itemize}
\end{proof}

\begin{example} \Cref{tab:psi_example} provides an example of the map $\psi$ as a bijection between the sets $\NDPF_{4,4}(x_4\leq 3)$ and $M_3(x_4\leq 3)$.

\begin{table}[ht]
    \centering

    \begin{tabular}{|c|c|}\hline
        $\x\in\NDPF_{4,4}(x_4\leq 3)$ & $\psi(\x)\in M_3(x_4\leq 3)$\\ \hline\hline
        (1,1,1,1) & ((1,1,1),1)\\ \hline
        (1,1,1,2) & ((1,1,2),1)\\ \hline
        (1,1,1,3) & ((1,1,3),1)\\ \hline
        (1,1,2,2) & ((1,2,2),1)\\ \hline
        (1,1,2,3) & ((1,2,3),1)\\ \hline
        (1,1,3,3) & ((1,1,3),3)\\ \hline
        (1,2,2,2) & ((1,2,2),2)\\ \hline
        (1,2,2,3) & ((1,2,3),2)\\ \hline
        (1,2,3,3) & ((1,2,3),3)\\ \hline
    \end{tabular}

    \caption{Elements of $\NDPF_{4,4}(x_4\leq 3)$ and their images $\psi(\x)\in M_3(x_4\leq 3)$.}
    \label{tab:psi_example}
\end{table}
\end{example}

\begin{theorem}\label{thm:main-bij}
    For fixed $k \in [n+1]$, there is a bijection
    \[\rho: \NDDPF_{n,n,d}(x_n \leq k) \to \NDPF_{n+d,n+d}(x_{n+d} \leq k-d)\]
    such that $\x \in \NDDPF_{n,n,d}(x_n\leq k)$ satisfies $D(\x) = F(\rho(\x))$.
\end{theorem}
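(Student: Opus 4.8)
The plan is to prove the statement by induction on the defect, realizing $\rho=\rho_d$ as the top edge of the commutative ladder in \Cref{fig:inductive-diagram}. Fixing $n$, $d$, and $k$, I would establish for each $0\leq i\leq d$ the claim $P(i)$: there is a bijection
\[\rho_i\colon \NDDPF_{n,n,i}(x_n\leq k-d+i)\to \NDPF_{n+i,n+i}(x_{n+i}\leq k-d)\]
satisfying $D(\x)=F(\rho_i(\x))$ for every $\x$ in the domain. Then $P(d)$ is exactly the theorem, since $k-d+d=k$. For the base case $P(0)$, observe that $\NDDPF_{n,n,0}=\NDPF_{n,n}$ as sets, and that for a defect-$0$ parking function $\delta(\x)_j\leq 0$ for all $j$, so the clause defining $D(\x)$ collapses to $\delta(\x)_j=0$, giving $D(\x)=F(\x)$; hence $\rho_0=\mathrm{id}$ works.

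For the inductive step, assume $P(i)$. The key device is to lift $\rho_i$ to a bijection on pair sets $\overline\rho_i\colon N_{n,i}\to M_{n+i}$ (restricted to the relevant bounds) by setting $\overline\rho_i(\x,j)=(\rho_i(\x),j)$. This is well-defined and bijective precisely because the inductive hypothesis gives $D(\x)=F(\rho_i(\x))$, so $j\in D(\x)$ if and only if $j\in F(\rho_i(\x))$. I would then define
\[\rho_{i+1}=\psi^{-1}\circ\overline\rho_i\circ\varphi,\]
which is a bijection as a composite of the bijections $\varphi$ (\Cref{lem:dpf-bij}), $\overline\rho_i$, and $\psi$ (\Cref{lem:ndpf-bij}). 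The set constraints track exactly as in the diagram: $\varphi$ lowers the defect from $i+1$ to $i$ and the bound from $k-d+i+1$ to $k-d+i$, while $\psi^{-1}$ raises the length from $n+i$ to $n+i+1$ keeping the bound $k-d$.

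The main obstacle, and the only part requiring real care, is verifying that the compatibility $D(\x)=F(\rho_{i+1}(\x))$ propagates through this composite. Writing $\varphi(\x)=(\x',p)$ with $p=\max(D(\x))$, \Cref{lem:dpf-bij} gives $D(\x)=\{j\in D(\x')\mid j\leq p\}$; the inductive hypothesis gives $D(\x')=F(\rho_i(\x'))$; and writing $\y=\rho_{i+1}(\x)=\psi^{-1}(\rho_i(\x'),p)$, \Cref{lem:ndpf-bij} gives both $p=\max(F(\y))$ and $F(\rho_i(\x'))=\{j\in F(\y)\mid j\leq p\}$. Chaining these yields
\[D(\x)=\{j\in F(\rho_i(\x'))\mid j\leq p\}=F(\rho_i(\x'))=\{j\in F(\y)\mid j\leq p\}=F(\y),\]
where the second equality holds because every element of $F(\rho_i(\x'))$ is already at most $p$, and the last because $p=\max(F(\y))$.

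The subtlety to watch throughout is precisely this bookkeeping of the distinguished index $p$: one must confirm that the index $\max(D(\x))$ singled out by $\varphi$ is carried unchanged by $\overline\rho_i$ to the index $\max(F(\y))$ re-inserted by $\psi^{-1}$, so that the two one-sided truncations appearing in the lemmas collapse into full-set equalities. Once this is secured, the induction closes and $P(d)$ furnishes the desired $\rho$ with $D(\x)=F(\rho(\x))$.
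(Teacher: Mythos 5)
Your architecture is exactly the paper's: induct on the defect, lift $\rho_i$ to the pair sets via $\overline\rho_i(\x,j)=(\rho_i(\x),j)$, and set $\rho_{i+1}=\psi^{-1}\circ\overline\rho_i\circ\varphi$; the base case and the construction of the inductive step are correct. The problem sits precisely in the step you flag as ``the only part requiring real care.'' The equality $\{j\in F(\rho_i(\x'))\mid j\leq p\}=F(\rho_i(\x'))$ is false in general, because $F(\rho_i(\x'))=D(\x')$ can contain indices strictly greater than $p=\max(D(\x))$. Concretely, take $\x=(1,1,3,5,7,7)\in\NDDPF_{6,6,2}$: then $\delta(\x)=(0,-1,0,1,2,1)$, $D(\x)=\{1,3,4\}$, $p=4$, and $\x'=(1,1,3,4,6,6)$ has $\delta(\x')=(0,-1,0,0,1,0)$ and $D(\x')=\{1,3,4,5\}$, which contains $5>p$. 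Here $\rho_1(\x')=(1,1,3,4,5,5,5)$ with $F(\rho_1(\x'))=\{1,3,4,5\}$, while $\y=\psi^{-1}(\rho_1(\x'),4)=(1,1,3,4,4,5,5,5)$ has $F(\y)=\{1,3,4\}$. So in your chain the second link ($\{1,3,4\}=\{1,3,4,5\}$) and the third link ($\{1,3,4,5\}=\{1,3,4\}$) are both false; the two errors cancel, so the endpoints $D(\x)$ and $F(\y)$ do agree, but as written the argument passes through a false assertion.

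The source of the trouble is the direction of the truncation identity for $\psi$. What the proof of \Cref{lem:ndpf-bij} actually establishes is $F(\y)=\{j\in F(\y')\mid j\leq p\}$ for $\psi(\y)=(\y',p)$ (the displayed ``Moreover'' clause of that lemma has $\x$ and $\x'$ transposed relative to its own proof, and the example above witnesses that the two versions genuinely differ), so you cannot conclude $F(\rho_i(\x'))=\{j\in F(\y)\mid j\leq p\}$, nor that $F(\rho_i(\x'))$ is bounded by $p$. The repair is immediate and is what the paper does: run the chain from the other end,
\[
F(\y)=\{j\in F(\rho_i(\x'))\mid j\leq p\}=\{j\in D(\x')\mid j\leq p\}=D(\x),
\]
using \Cref{lem:ndpf-bij} (in its correct direction), the inductive hypothesis, and \Cref{lem:dpf-bij} in turn, and never asserting $F(\rho_i(\x'))=F(\y)$ as an untruncated set equality. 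With that one step rewritten, your induction closes and the proof coincides with the paper's.
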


\begin{proof}
    We prove that there exist bijections \[\rho_r: \NDDPF_{n,n,r}(x_n \leq k-d+r) \to \NDPF_{n+r,n+r}(x_{n+r} \leq k-d)\] for $r\geq0$ such that $D(\x)=F(\rho_r(\x))$, and the desired bijection is obtained as the case when $r=d$.

    We proceed by induction on $r\geq0$. For the base case $r=0$, note that $\NDDPF_{n,n,0} = \NDPF_{n,n}$. The elements of the decrement set of a parking function $\x$ with no defect are precisely the elements of the fixed point set of $\x$, so we take $\rho_0$ to be the identity map.

    Now for the purpose of induction, suppose there is a bijection \[\rho_r: \NDDPF_{n,n,r}(x_n \leq k-d+r) \to \NDPF_{n+r,n+r}(x_{n+r} \leq k-d)\] such that $D(\x)=F(\rho_r(\x))$. Because $D(\x)=F(\rho_r(\x))$, the bijection $\rho_r$ determines a bijection \[\overline\rho_r:N_{n,r}(x_n\leq k-d+r)\to M_{n+r}(x_{n+r}\leq k-d)\] where $\overline\rho_r(\x,i)=(\rho_r(\x),i)$. From Lemmas~\ref{lem:dpf-bij} and~\ref{lem:ndpf-bij}, we have bijections
    \begin{align*}
        \varphi&: \NDDPF_{n,n,r+1}(x_n \leq k-d+r+1) \to N_{n,r}(x_n \leq k-d+r),\mbox{  and}\\
        \psi&: \NDPF_{n+r+1,n+r+1}(x_{n+r+1} \leq k-d) \to M_{n+r}(x_{n+r} \leq k-d)
    \end{align*}
    that map
    \begin{align*}
        \varphi&: \x \mapsto (\x', i), \mbox{ and }\\
        \psi&: \y \mapsto (\y', i)
    \end{align*}
    with
    \begin{align*}
        D(\x) &= \{j \in D(\x') \mid j \leq i\},\mbox{ and }\\
        F(\y) &= \{j \in F(\y') \mid j \leq i\}.
    \end{align*}

    We then have a bijection \[\rho_{r+1}: \NDDPF_{n,n,r+1}(x_n \leq k-d+r+1) \to \NDPF_{n+r+1,n+r+1}(x_{n+r+1} \leq k-d)\] where $\rho_{r+1}=\psi^{-1}\circ\overline\rho_r\circ\varphi$. It remains only to show that $D(\x)=F(\rho_{r+1}(\x))$. Indeed,
    \begin{align*}
        F(\rho_{r+1}(\x))&=F(\psi^{-1}\circ \overline\rho_r\circ\varphi(\x))\\
        &=F(\varphi^{-1}\circ\overline\rho_r(\x',i))\\
        &=F(\psi^{-1}(\rho_r(\x'),i))\\
        &=F(\y)\\
        \intertext{ where $\psi(\y)=(\rho_r(\x'),i)$. So,}
        F(\rho_{r+1}(\x))&=F(\y)\\
        &=\{j\in F(\rho_r(\x'))\mid j\leq i\}\\
        &=\{j\in D(\x')\mid j\leq i\}\\
        &=D(\x).\qedhere
    \end{align*}
\end{proof}

\begin{example}
    Although the bijection $\rho$ given in \Cref{thm:main-bij} is defined inductively, one can compute $\rho$ of a defective parking function in practice. Let $\x=(1,1,3,6,6,7)\in\NDDPF_{6,6,2}$. In the diagram below, we apply $\varphi$ twice to obtain defective parking functions with smaller defect, recording the value $i$ along the arrow. We then apply $\psi^{-1}$ with the same $i$ values in reverse order to obtain $\rho(\x)=(1,1,3,4,4,5,5,5)$.

\[
\resizebox{\textwidth}{!}{
\begin{tikzcd}[ampersand replacement=\&]
	{(1,1,3,5,7,7)\in\NDDPF_{6,6,2}(x_6\leq7)} \&\& {\delta=(0,-1,0,1,2,1)} \\
	{(1,1,3,4,6,6)\in\NDDPF_{6,6,1}(x_6\leq6)} \&\& {\delta=(0,-1,0,0,1,0)} \\
	{(1,1,3,4,5,5)\in\NDDPF_{6,6,0}(x_6\leq5)} \&\& {\delta=(0,-1,0,0,0,-1)} \\
	{(1,1,3,4,5,5,5)\in\NDPF_{7,7}(x_6\leq5)} \&\& {\delta=(0,-1,0,0,0,-1,-2)} \\
	{(1,1,3,4,4,5,5,5)\in\NDPF_{8,8}(x_6\leq5)} \&\& {\delta=(0,-1,0,0,-1,-1,-2,-3)}
	\arrow["{(i=4)}", from=1-1, to=2-1]
	\arrow["\varphi"', from=1-1, to=2-1]
	\arrow["{(i=5)}", from=2-1, to=3-1]
	\arrow["\varphi"', from=2-1, to=3-1]
	\arrow["{(i=5)}", from=3-1, to=4-1]
	\arrow["{\psi^{-1}}"', from=3-1, to=4-1]
	\arrow["{(i=4)}", from=4-1, to=5-1]
	\arrow["{\psi^{-1}}"', from=4-1, to=5-1]
\end{tikzcd}
}
\]

\end{example}

Setting $k=n+1$ in Theorem~\ref{thm:main-bij} implies the following result.  

\begin{corollary}\label{cor:n=m}
    For any $n\in\NN$ and nonnegative integer $d$, there is a bijection between the sets $\NDDPF_{n,n,d}$ and $\NDPF_{n+d,n+d}(x_{n+d} \leq n+1-d)$. 
\end{corollary}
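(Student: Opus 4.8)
The plan is to derive \Cref{cor:n=m} as an immediate specialization of \Cref{thm:main-bij}. Recall that \Cref{thm:main-bij} asserts, for each fixed $k\in[n+1]$, a bijection
\[
\rho:\NDDPF_{n,n,d}(x_n\leq k)\to\NDPF_{n+d,n+d}(x_{n+d}\leq k-d).
\]
So the natural approach is simply to choose the value of $k$ that removes the upper-bound restriction on the source set, and then read off what the corresponding restriction on the target set becomes.

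First I would observe that for any $\x\in\NDDPF_{n,n,d}$ we automatically have $x_n\leq n+1$, since every preference list lies in $[n+1]^m$ and $\x$ is nondecreasing. Consequently the condition ``$x_n\leq n+1$'' is vacuous, which gives the set equality $\NDDPF_{n,n,d}(x_n\leq n+1)=\NDDPF_{n,n,d}$. This is the key observation: it lets me take $k=n+1$, the largest admissible value in $[n+1]$, so that the restricted source set is in fact the full set $\NDDPF_{n,n,d}$.

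Next I would substitute $k=n+1$ into the target set of \Cref{thm:main-bij}. The condition $x_{n+d}\leq k-d$ becomes $x_{n+d}\leq (n+1)-d=n+1-d$, so the target is $\NDPF_{n+d,n+d}(x_{n+d}\leq n+1-d)$, exactly as claimed. Therefore the bijection $\rho$ supplied by \Cref{thm:main-bij} for the parameter $k=n+1$ restricts (more precisely, already is) a bijection
\[
\NDDPF_{n,n,d}\to\NDPF_{n+d,n+d}(x_{n+d}\leq n+1-d),
\]
which is precisely the assertion of \Cref{cor:n=m}.

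There is essentially no obstacle here, since the corollary is a pure specialization; the only point requiring care is verifying that $k=n+1$ is a legitimate choice, i.e. that it lies in the index set $[n+1]$ over which \Cref{thm:main-bij} ranges, and that the condition $x_n\leq n+1$ is genuinely vacuous. Both are immediate from the convention that preference lists take values in $[n+1]$. I would state these two checks explicitly so the reader sees that no hypothesis of \Cref{thm:main-bij} is being violated, and then conclude.
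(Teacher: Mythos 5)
Your proposal is correct and is exactly the paper's argument: the paper obtains \Cref{cor:n=m} by setting $k=n+1$ in \Cref{thm:main-bij}, just as you do. Your explicit checks that $k=n+1\in[n+1]$ and that the condition $x_n\leq n+1$ is vacuous are the right details to verify.
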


\subsection{Different Numbers of Cars and Spots}
We now consider the more general case where there are $m\in\NN$ cars and  $n\in\NN$ parking spots.
We begin by establishing an analogous result to Corollary~\ref{cor:n=m} in the case when $m>n$.
\begin{proposition}\label{prop:m>n}
    For $m > n$, there exists a bijection 
    \[\theta:\NDDPF_{m,n,d} \bij \NDPF_{n+d,n+d}(x_{n+d} \leq m+1-d).\]
\end{proposition}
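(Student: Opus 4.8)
The plan is to reduce the unequal case $m>n$ to the equal case already established in Theorem~\ref{thm:main-bij}, by reinterpreting a nondecreasing defective $(m,n)$-parking function as a \emph{bounded} defective $(m,m)$-parking function. Throughout I assume $d \geq m-n$, since by the lemma bounding $\de(\x)$ the set $\NDDPF_{m,n,d}$ is empty otherwise. Set $e = d-(m-n) \geq 0$.

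The crux of the argument is the set identity
\[\NDDPF_{m,n,d} = \NDDPF_{m,m,e}(x_m \leq n+1).\]
To justify it, I would first note that for a nondecreasing list $\x$ we have $\gamma(\x)_1 = x_1 - 1 \geq 0$, so $\pde(\x) = \max(\gamma(\x)) \geq 0$ always. Consequently $\de(\x) = \max(\pde(\x)+(m-n),0) = \pde(\x)+(m-n)$ in the $(m,n)$-setting, whereas the very same sequence, read as an element of $[m+1]^m$, has defect $\max(\pde(\x),0) = \pde(\x)$ in the $(m,m)$-setting. Hence $\x \in \NDDPF_{m,n,d}$ exactly when $\pde(\x) = e$ and $\x$ takes values in $[n+1]$. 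Since $n+1 \leq m+1$ and $\x$ is nondecreasing, the constraint $\x \in [n+1]^m$ is equivalent to the single inequality $x_m \leq n+1$, which yields the displayed identity.

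With the identity in hand, I would apply Theorem~\ref{thm:main-bij} in the equal case, playing $m$ in the role of $n$, taking defect $e$, and setting the last-entry bound to $k = n+1$. Because $n < m$ we have $n+1 \in [m+1]$, so the hypotheses hold and the theorem supplies a bijection
\[\rho: \NDDPF_{m,m,e}(x_m \leq n+1) \;\longrightarrow\; \NDPF_{m+e,\,m+e}(x_{m+e} \leq (n+1)-e).\]
The proof then closes with parameter bookkeeping: $m+e = m + d-(m-n) = n+d$ and $(n+1)-e = (n+1)-\bigl(d-(m-n)\bigr) = m+1-d$. Thus $\rho$ is a bijection onto $\NDPF_{n+d,n+d}(x_{n+d} \leq m+1-d)$, and under the identification above I may simply take $\theta = \rho$ to obtain the desired $\theta:\NDDPF_{m,n,d} \bij \NDPF_{n+d,n+d}(x_{n+d} \leq m+1-d)$.

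The main obstacle is establishing the set identity cleanly; everything after it is bookkeeping. The decisive point is the observation $\pde(\x)\geq 0$ for nondecreasing lists, which removes the outer $\max(\cdot,0)$ and makes the $(m,n)$- and $(m,m)$-defects of a fixed sequence differ by the constant $m-n$. One should also be careful that the value-ceiling $[n+1]$ collapses to the lone inequality $x_m \leq n+1$ precisely because $\x$ is nondecreasing, and note the caveat $d \geq m-n$: for smaller $d$ the source $\NDDPF_{m,n,d}$ is empty while the target need not be, so the bijection is understood on the range of achievable defects.
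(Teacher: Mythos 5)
Your proposal is correct and follows essentially the same route as the paper: the paper's map $\theta_1$ is exactly your set identity (reinterpreting the same tuple with $m-n$ extra spots, which shifts the defect by $m-n$), followed by the same application of Theorem~\ref{thm:main-bij} with $k=n+1$ and the same parameter bookkeeping. Your justification of the defect shift via $\pde(\x)\geq 0$ is a slightly more explicit version of the paper's observation that $\delta(\x)_1 \geq m-n$.
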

\begin{proof}
    Consider $\x \in \NDDPF_{m,n,d}$. Because $x_1 \geq 1$, we have that 
    \[d = \max_{i \in [m]}(\delta(\x)_i) \geq \delta(\x)_1 \geq m-n.\]
    Introducing $k$ new spots at the end of the street  reduces the defect of $\x$ by $k$. So in particular, by adding $m-n$ spots we have a bijection
    \[\theta_1:\NDDPF_{m,n,d} \bij \NDDPF_{m,m,d-(m-n)}(x_{m} \leq n+1).\]
    Theorem~\ref{thm:main-bij} yields a bijection
    \[\rho:\NDDPF_{m,m,d-(m-n)}(x_m \leq n+1) \bij \NDPF_{n+d,n+d}(x_{n+d} \leq m+1-d).\]
    The desired result comes from the composition $\theta=\rho \circ\theta_1$.
\end{proof}

\begin{definition}\label{def:conjugate-lat}
The \textit{conjugate lattice path} of $\w$, which we denote by $\bar{\w}$, is obtained by reversing $\w$ and then switching $E$'s to $N$'s and $N$'s to $E$'s. The \emph{conjugate of a nondecreasing  preference list} $\x$, denoted by $\bar{\x}$, is obtained by conjugating its corresponding lattice path, then turning the result back into a parking function.
\end{definition}

The key intuition in Definition~\ref{def:conjugate-lat} is that conjugating the lattice path corresponds to ``flipping'' the path along the line $y=-x$ in a process similar to conjugating a Young diagram.

\begin{example} Consider $\x=(1,1,2,3,5,5,6)$. The $(5,7)$ lattice path corresponding to $\x$ is given by $\w=NNENENEENNEN$. Then the corresponding conjugate path to $\w$ is 
\[\bar{\w}=ENEENNENENEE.\] 
Both  $\w$ and $\bar{\w}$ are illustrated in Figure~\ref{fig:path}. We conclude by noting that based on $\bar{\w}$, one can readily confirm that $\bar{\x}=(2,4,4,5,6)$.

\begin{figure}[ht] 
   \centering
    \resizebox{1.5in}{!}{
\usetikzlibrary{decorations.markings}
\begin{tikzpicture}
\draw[gray](0,0) grid (5,7);
\draw[red, ultra thick](0,0)--(0,2)--(1,2)--(1,3)--(2,3)--(2,4)--(4,4)--(4,6)--(5,6)--(5,7);
\draw[blue, thick, dashed] (0,2)--(5,7) node [above] {$y=x+2$};

\node at (2.5,-.75) {$\w=NNENENEENNEN$};
\end{tikzpicture}}
\qquad\qquad
\resizebox{!}{1.5in}{
\begin{tikzpicture}
\draw[gray] (0,0) grid (7,5);
\draw[red, ultra thick](0,0)--(1,0)--(1,1)--(3,1)--(3,3)--(4,3)--(4,4)--(5,4)--(5,5)--(7,5);
\draw[blue, thick, dashed] (0,0)--(5,5) node [above] {$y=x$};
\draw[black, thick, dotted] (2,0)--(7,5) node [above] {$y=x-2$};
\node at (3.5,-.75) {$\bar{\w}=ENEENNENENEE$};
\end{tikzpicture}}
    \caption{The lattice path $\w$ corresponding to $\x=(1,1,2,3,5,5,6)$ as well as its conjugate lattice path $\bar{\w}$ corresponding to $\bar{\x} = (2,4,4,5,6)$. This figure also illustrates how the line $y=x+2$ is taken to the line $y=x$ by conjugation.}
    \label{fig:path}
\end{figure}
\begin{figure}[h!]
\usetikzlibrary{decorations.markings}
\end{figure}
\end{example}

\begin{proposition}\label{prop:conj}
    If $\x\in\NDDPF_{m,n,d}$, then $\bar\x\in\NDDPF_{n,m,d+(n-m)}$. Furthermore, if \[c:\NDDPF_{m,n,d} \to \NDDPF_{n,m,d+(n-m)}\] is defined by $c(\x)=\bar{\x}$, then $c$ is a bijection.
\end{proposition}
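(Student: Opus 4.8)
The plan is to prove \Cref{prop:conj} by reducing everything to the level of lattice paths, where conjugation is manifestly an involution. First I would recall the bijection from Equation~\eqref{eq:pref_path_bij} between nondecreasing preference lists in $[n+1]^m$ and $(n,m)$-lattice paths, and observe that conjugating a lattice path $\w$ (reversing and swapping $N\leftrightarrow E$) turns an $(n,m)$-lattice path into an $(m,n)$-lattice path. Since reversing the word and swapping step types is visibly an involution on lattice-path words, the induced map $c$ on preference lists is a bijection, with $c\circ c = \mathrm{id}$; this immediately gives the ``bijection'' half of the statement once I check that $\bar\x$ lands in the claimed set. So the substantive content is entirely the defect computation: showing $\de(\bar\x) = d + (n-m)$.

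The key step is to track how $\dip$ transforms under conjugation. Using the earlier proposition identifying $\Krew_{d,n}(\lambda)$ with lattice paths, I would use that for an $(n,m)$-lattice path $\w$ corresponding to $\x$, one has $\pde(\x) = \max_{j\in[m]}(x_j - j) = \dip(\w)$, i.e.\ the vertical drop of $\w$ below the diagonal $y=x$. The geometric picture (illustrated in Figure~\ref{fig:path}) is that conjugation is a reflection through the line $y=-x$ followed by a shift; under this reflection the line $y = x$ is preserved as a line of the same slope, but because $\w$ goes to height $m$ over width $n$ while $\bar\w$ goes to height $n$ over width $m$, the relevant diagonal shifts by $n-m$. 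Concretely, I would show that the maximum distance a point of $\w$ falls below $y=x$ equals the maximum distance a point of $\bar\w$ falls below the line $y = x + (n-m)$, which translates into $\dip(\bar\w) = \dip(\w) + (n-m)$ after accounting for the new diagonal. Equivalently, at the level of the defect sequences: if $\bar\x = (\bar x_1,\ldots,\bar x_n)$, the reflection sends the point just before the $i$th north step of $\w$ to a corresponding point of $\bar\w$, and computing $\max_j(\bar x_j - j)$ directly yields $\pde(\bar\x) = \pde(\x) + (n-m)$.

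From there the conclusion is bookkeeping. By \Cref{prop:defect2} (or \Cref{thm:defect_characterization}) applied with the roles of $m$ and $n$ swapped, the defect of $\bar\x\in[m+1]^n$ is
\[
\de(\bar\x) = \max(\pde(\bar\x) + (n-m), 0).
\]
Wait---more carefully, $\bar\x$ is a length-$n$ list with $m$ spots, so its defect is $\max(\pde(\bar\x) + (n-m), 0)$, and substituting $\pde(\bar\x) = \pde(\x) + (n - m)$ gives $\de(\bar\x) = \max(\pde(\x) + 2(n-m) + \cdots, 0)$, which I would simplify carefully to land on $d + (n-m)$; the cleanest route is to verify directly that conjugation shifts the controlling diagonal by exactly $n-m$ so that the defects differ by $n-m$. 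Since $d \geq \max(m-n,0)$ for $\x\in\NDDPF_{m,n,d}$, the quantity $d+(n-m)$ is nonnegative, so $\bar\x$ genuinely lies in $\NDDPF_{n,m,d+(n-m)}$ and no truncation at $0$ interferes.

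The main obstacle I anticipate is pinning down the diagonal-shift bookkeeping precisely: the reflection through $y=-x$ does not fix the diagonal $y=x$ pointwise, and one must be careful that the ``lowest point below the diagonal'' for $\w$ corresponds correctly to the analogous point for $\bar\w$ measured against the \emph{shifted} diagonal, so that the defect changes by exactly $n-m$ rather than by some other affine function of $m$ and $n$. I would guard against sign errors by checking the computation against the worked example with $\x=(1,1,2,3,5,5,6)$ and $\bar\x=(2,4,4,5,6)$ in Figure~\ref{fig:path}, where $m=7$, $n=5$, so $\de(\x)=\max(m-n,0)=2$ and the formula predicts $\de(\bar\x)=2+(5-7)=0$, consistent with $\bar\x$ being an honest $(5,7)$-parking function.
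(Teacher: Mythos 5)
Your overall strategy---pass to lattice paths via Equation~\eqref{eq:pref_path_bij}, obtain bijectivity from the fact that conjugation of path words is an involution, and get the defect statement by tracking how the controlling diagonal moves---is exactly the route the paper takes. However, the one substantive identity you commit to is false, and your own attempt to assemble the conclusion from it does not close. You claim $\pde(\bar\x)=\pde(\x)+(n-m)$. Test this on your own example: for $\x=(1,1,2,3,5,5,6)$ with $m=7$, $n=5$ one has $\gamma(\x)=(0,-1,-1,-1,0,-1,-1)$, so $\pde(\x)=0$, while $\bar\x=(2,4,4,5,6)$ has $\gamma(\bar\x)=(1,2,1,1,1)$, so $\pde(\bar\x)=2\neq 0+(5-7)$. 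The shift goes the other way: here $\pde(\bar\x)=\pde(\x)+(m-n)$, and more usefully $\pde(\bar\x)=\de(\x)=d$ and $\pde(\x)=\de(\bar\x)$. With your sign, substitution into $\de(\bar\x)=\max(\pde(\bar\x)+(n-m),0)$ gives $\max(\pde(\x)+2(n-m),0)$, which equals the desired $d+(n-m)=\pde(\x)$ only when $m=n$; you flag this mismatch with ``$+\cdots$'' and ``simplify carefully'' but never resolve it, so as written the argument does not establish the proposition.

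The repair is precisely the diagonal bookkeeping you defer to, and it is cleanest done without $\pde$ at all. The defect of $\x$ is the maximal vertical distance of the $(n,m)$-path $\w$ below the line $y=x+(m-n)$ through its endpoint $(n,m)$ (this maximum is attained at a point just before a north step or at the endpoint, which lies on the line, so it is always $\geq 0$ and no truncation issue arises). Conjugation acts on points by $(a,b)\mapsto(m-b,n-a)$, hence carries the line $y=x+(m-n)$ to the line $y=x$ through $(0,0)$ and preserves vertical distances below it. The defect of $\bar\x$ (now $n$ cars and $m$ spots) is measured against $y=x+(n-m)$, which lies $n-m$ above $y=x$, so every distance-below increases by exactly $n-m$ and the maximum goes from $d$ to $d+(n-m)$; your observation that $d\geq\max(m-n,0)$ guarantees $d+(n-m)\geq 0$ then completes the membership claim, and the involution property gives bijectivity as you say. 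If you rewrite the middle step of your proposal with the corrected shift (by $m-n$, not $n-m$, at the level of $\pde$, or equivalently by working with the two controlling diagonals directly), the proof matches the paper's.
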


\begin{proof}
    Let $\x\in\NDDPF_{m,n,d}$. By \Cref{thm:defect_characterization}, $\x$ corresponds via Equation~\ref{eq:pref_path_bij} to an $(n,m)$-lattice path $\w$ whose maximum vertical distance below the line $y=x+(m-n)$ is $d$. Because the line $y=x+(m-n)$ passes through the point $(n,m)$, we have that conjugation takes the line $y=x+(m-n)$ to the line with slope one passing through $(0,0)$, that is, $y=x$ (see \Cref{fig:path} for an example). A point on the path $\w$ that is $k$ below $y=x+(m-n)$ then becomes a point on the path $\bar\w$ that is $k$ below $y=x$. This point is then $k+(n-m)$ below the line $y=x+(n-m)$. The largest such distance is achieved when $k=d$, hence the preference list $\bar\x$ is contained in $\NDDPF_{n,m,d+(n-m)}$.

    Because conjugation of lattice paths is an involution, we have immediately that the map $c:\NDDPF_{m,n,d} \to \NDDPF_{n,m,d+(n-m)}$ is a bijection.
\end{proof}

\begin{proposition}\label{prop:m<n}
    If $m < n$, there is a bijection
    \[\gamma:\NDDPF_{m,n,d} \bij \NDPF_{n+d,n+d}(x_{n+d} \leq m+1-d).\]
\end{proposition}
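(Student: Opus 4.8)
The plan is to reduce the case $m<n$ to the already-established case of equal numbers of cars and spots by using the conjugation bijection of Proposition~\ref{prop:conj} together with Proposition~\ref{prop:m>n}. Since $m<n$, conjugation turns the roles of $m$ and $n$ around, producing a situation with more cars than spots, which is exactly the regime handled by Proposition~\ref{prop:m>n}. First I would apply Proposition~\ref{prop:conj} with the current parameters. Setting $c:\NDDPF_{m,n,d}\to\NDDPF_{n,m,d+(n-m)}$, we obtain a bijection, and since $m<n$ the target set has strictly more cars ($n$) than spots ($m$) and defect $d'=d+(n-m)$. Note that $d'-(n-m)=d\geq0$, so $d'\geq n-m$, which is precisely the lower bound required for the target to be nonempty in the regime of more cars than spots.

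Next I would feed the conjugated parking functions into Proposition~\ref{prop:m>n}. Applying that proposition with cars equal to $n$, spots equal to $m$, and defect $d'=d+(n-m)$ yields a bijection
\[
\theta:\NDDPF_{n,m,d+(n-m)} \bij \NDPF_{(m+d')+d',\,(m+d')+d'}\bigl(x_{\,\cdot\,}\leq (n+1)-d'\bigr).
\]
Here I must carefully substitute the parameters: in the statement of Proposition~\ref{prop:m>n}, the roles of ``$m$'' and ``$n$'' are played by $n$ and $m$ respectively, and its ``$d$'' is $d'=d+(n-m)$. Computing the target indices, the number of spots becomes $m+d' = m+d+(n-m)=n+d$, and the last-coordinate bound becomes $n+1-d' = n+1-(d+(n-m)) = m+1-d$. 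Thus the target is exactly $\NDPF_{n+d,n+d}(x_{n+d}\leq m+1-d)$, which matches the claimed codomain.

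Finally I would define $\gamma$ as the composition $\gamma=\theta\circ c$, which is a bijection as a composite of bijections. The bulk of the work is simply verifying that the parameter arithmetic lines up, namely that $m+d'=n+d$ and $n+1-d'=m+1-d$ after the substitution; both follow immediately from $d'=d+(n-m)$. The main conceptual obstacle is keeping the swapped roles of $m$ and $n$ straight when invoking Proposition~\ref{prop:m>n}, since that proposition is stated with more cars than spots and we are applying it to the conjugate, where the original roles are interchanged. Once the substitution is done correctly, the result follows directly, so I expect no genuine difficulty beyond this careful bookkeeping.
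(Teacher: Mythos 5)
Your proof is correct and follows the paper's own argument exactly: conjugate via Proposition~\ref{prop:conj} to move into the more-cars-than-spots regime, then apply Proposition~\ref{prop:m>n} with the roles of $m$ and $n$ swapped and defect $d'=d+(n-m)$, and the parameter arithmetic $m+d'=n+d$ and $n+1-d'=m+1-d$ checks out. (The displayed subscript $(m+d')+d'$ in your intermediate formula is a typo for $m+d'$, but your subsequent computation uses the correct value, so nothing is affected.)
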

\begin{proof}
    Proposition~\ref{prop:conj} gives a bijection
    \[\gamma_1:\NDDPF_{m,n,d} \bij \NDDPF_{n,m,d-(m-n)}.\]
    Proposition~\ref{prop:m>n} gives a bijection
    \[\theta:\NDDPF_{n,m,d-(m-n)} \bij \NDPF_{n+d,n+d}(x_{n+d} \leq m+1 - d).\]
    The desired result comes from the composition $\gamma=\theta\circ \gamma_1$.
\end{proof}

\begin{corollary}\label{cor:rational to classical}
For all $m,n\in\NN$ and integer $d$ satisfying $n-m\leq d\leq n$, the sets $\NDDPF_{m,n,d}$ and $\NDPF_{n+d,n+d}(x_{n+d} \leq m+1-d)$ are in bijection.
\end{corollary}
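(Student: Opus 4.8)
The plan is to prove \Cref{cor:rational to classical} by a straightforward case analysis on the relationship between $m$ and $n$, invoking the bijections already established for each case. The statement claims a bijection between $\NDDPF_{m,n,d}$ and $\NDPF_{n+d,n+d}(x_{n+d}\leq m+1-d)$ for all $m,n\in\NN$ and all integers $d$ with $n-m\leq d\leq n$, so the natural approach is simply to observe that the three cases $m=n$, $m>n$, and $m<n$ together cover all of $\NN\times\NN$, and that each case has been handled by a prior result.

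First I would dispose of the case $m=n$ by applying \Cref{cor:n=m}, which provides a bijection between $\NDDPF_{n,n,d}$ and $\NDPF_{n+d,n+d}(x_{n+d}\leq n+1-d)$; since $m=n$ here, the bound $n+1-d$ equals $m+1-d$, so this is exactly the claimed target set. Next I would handle $m>n$ via \Cref{prop:m>n}, whose conclusion $\theta:\NDDPF_{m,n,d}\bij\NDPF_{n+d,n+d}(x_{n+d}\leq m+1-d)$ matches the statement verbatim. Finally, for $m<n$ I would invoke \Cref{prop:m<n}, which again yields precisely the bijection $\gamma:\NDDPF_{m,n,d}\bij\NDPF_{n+d,n+d}(x_{n+d}\leq m+1-d)$. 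Since these three cases are mutually exclusive and exhaustive, the corollary follows immediately in all cases.

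The only genuine subtlety to address is the hypothesis $n-m\leq d\leq n$ and checking that it is consistent with the ranges implicitly required by the cited propositions. I would note that \Cref{prop:conj} and the arguments in \Cref{prop:m>n,prop:m<n} already build in the observation that the defect of any $\x\in\NDDPF_{m,n,d}$ satisfies $d\geq m-n$ (equivalently $d\geq\max(m-n,0)$, and $d\geq n-m$ is the binding constraint when $m<n$ after conjugation), so that the target parking-function sets are nonempty and well-defined exactly when $d$ lies in the stated range. The upper bound $d\leq n$ corresponds to the constraint that $x_{n+d}\leq m+1-d$ can only be satisfiable for a nondecreasing parking function of length $n+d$ when $m+1-d\geq 1$, i.e.\ $d\leq m$, together with the earlier bound $\de(\x)\leq m$; I would simply remark that the range $n-m\leq d\leq n$ is the hypothesis under which all the intermediate bijections are simultaneously valid.

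The main obstacle, such as it is, will not be any deep argument but rather bookkeeping: confirming that the target set $\NDPF_{n+d,n+d}(x_{n+d}\leq m+1-d)$ produced by each of \Cref{cor:n=m}, \Cref{prop:m>n}, and \Cref{prop:m<n} is literally the same set, with matching parameters, so that no relabeling or index shift is swept under the rug. I expect this to be routine once one carefully substitutes the case-specific values of $m$ and $n$ into the bound $m+1-d$. Thus the proof reduces to the single sentence that the three cases partition the possibilities and each is covered by an established bijection, and I would write it as such.

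\begin{proof}
We consider three cases according to the relationship between $m$ and $n$, which together are exhaustive.

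If $m=n$, then \Cref{cor:n=m} provides a bijection between $\NDDPF_{n,n,d}$ and $\NDPF_{n+d,n+d}(x_{n+d}\leq n+1-d)$. Since $m=n$, the bound $n+1-d$ coincides with $m+1-d$, so this is precisely the asserted bijection.

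If $m>n$, then \Cref{prop:m>n} gives a bijection
\[\theta:\NDDPF_{m,n,d}\bij\NDPF_{n+d,n+d}(x_{n+d}\leq m+1-d),\]
which is exactly the claimed statement.

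If $m<n$, then \Cref{prop:m<n} gives a bijection
\[\gamma:\NDDPF_{m,n,d}\bij\NDPF_{n+d,n+d}(x_{n+d}\leq m+1-d),\]
again matching the claim.

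In each case the hypothesis $n-m\leq d\leq n$ ensures that the intermediate bijections apply and that the target set is well-defined, as the defect of any element of $\NDDPF_{m,n,d}$ satisfies $d\geq\max(m-n,0)$ and $d\geq n-m$ after conjugation. As the three cases partition all pairs $(m,n)\in\NN\times\NN$, the result follows.
\end{proof}
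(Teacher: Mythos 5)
Your proof is correct and takes exactly the same route as the paper: a three-way case split on $m=n$, $m>n$, and $m<n$, handled by Corollary~\ref{cor:n=m}, Proposition~\ref{prop:m>n}, and Proposition~\ref{prop:m<n} respectively. The additional remarks about the range of $d$ are harmless but not needed beyond what the cited results already supply.
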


\begin{proof}
    Either $m=n$, $m>n$, or $m<n$. These cases are covered by Corollary~\ref{cor:n=m}, Proposition~\ref{prop:m>n}, and Proposition~\ref{prop:m<n}, respectively.
\end{proof}

Next we establish a bijection between nondecreasing parking functions with restricted maximal values and two-row standard Young tableaux. This connection is widely accepted as fact, however after a literature search we were unable to find the original proof. Thus, for sake of completion, we include an independent proof below.

If $m=n$ and an $(n,n)$-lattice path lies above the line $y=x$, then the lattice path is called a Dyck path of semilength $n$. 
We let $\mathcal{D}_n$ denote the set of Dyck paths of semilength $n$.

\begin{proposition}\label{prop:SYT}
For any positive integers $a\geq b$, let $\SYT(a, b)$ denote the set of standard Young tableaux of shape $(a,b)$. For positive integers $n, k$ with $k \leq n$, there is a bijection
    \[\sigma: \NDPF_{n,n}(x_n \leq k) \to \SYT(n, k - 1).\]
\end{proposition}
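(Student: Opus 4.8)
The plan is to construct the bijection $\sigma$ by reading off the shape of a standard Young tableau directly from the lattice path of the parking function. Recall that an element $\x \in \NDPF_{n,n}(x_n \leq k)$ corresponds via Equation~\eqref{eq:pref_path_bij} to an $(n,n)$-lattice path $\w$ lying weakly above the diagonal $y=x$, that is, a Dyck path of semilength $n$; the condition $x_n \leq k$ translates into a constraint on the height of the final north step. A two-row standard Young tableau of shape $(a,b)$ is the same data as a lattice path with $a$ steps of one type and $b$ of the other staying on the correct side of the diagonal, where the entries $1,2,\ldots,a+b$ are placed according to step order: the positions of the $N$ steps (say) index the entries in the second row and the $E$ steps index the first row. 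First I would make this correspondence precise, sending $\w = w_1 w_2 \cdots w_{2n}$ to the filling that places $j$ in the first row if $w_j = E$ and in the second row if $w_j = N$, reading left to right.

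The key steps, in order, are as follows. First I would verify that the standard Young tableau condition---rows and columns strictly increasing---is equivalent to the lattice path staying weakly above the diagonal. Columns increasing is automatic since within a fixed column the second-row entry exceeds the first; rows increasing is automatic from reading left to right; and the column-strictness paired with the requirement that the entry above be larger is exactly the statement that after any prefix the number of $N$ steps does not exceed the number of $E$ steps, i.e.\ the Dyck condition. Second, I would track how the shape arises: an $(n,n)$-Dyck path has $n$ east and $n$ north steps, but to land in shape $(n, k-1)$ rather than $(n,n)$ I must account for the restriction $x_n \leq k$. The constraint $x_n \leq k$ says the $n$th north step occurs after at most $k-1$ east steps, which caps how many north steps can appear and forces the tableau to have only $k-1$ boxes in its second row. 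I would make this counting explicit so that the target is precisely $\SYT(n,k-1)$.

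The main obstacle I expect is the bookkeeping around the value $k$: establishing that the condition $x_n \leq k$ corresponds exactly to the second row having length $k-1$ (rather than an off-by-one error yielding $k$ or $k-2$), and confirming that this restriction is compatible with the Dyck/tableau conditions so that the map is genuinely a bijection onto all of $\SYT(n,k-1)$. Concretely, $x_n \leq k$ means the last north step is preceded by at most $k-1$ east steps, so the final north step sits in a column indexed by at most $k-1$; pairing north steps with second-row entries, this bounds the second row length by $k-1$, and every standard tableau of shape $(n,k-1)$ arises this way. To finish, I would check bijectivity by exhibiting the inverse: given $T \in \SYT(n,k-1)$, reconstruct $\w$ by recording an $E$ for each first-row entry and an $N$ for each second-row entry in increasing order of entry value, then recover $\x$ via Equation~\eqref{eq:pref_path_bij}; the standardness of $T$ guarantees the Dyck condition, and the row-length bound guarantees $x_n \leq k$, so $\sigma$ and this inverse are mutually inverse maps.
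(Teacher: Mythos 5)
Your overall strategy coincides with the paper's: send $\x$ to its lattice path via Equation~\eqref{eq:pref_path_bij} and read off a two-row tableau by recording the positions of the two step types in the two rows. However, as written the construction fails at two concrete points. First, your row convention is backwards. The path of a nondecreasing parking function lies weakly \emph{above} $y=x$, so every prefix contains at least as many $N$'s as $E$'s (not the reverse, as you assert when describing the Dyck condition), and in particular the first step is always $N$ because $x_1=1$. Placing the $N$-positions in the second row therefore puts the entry $1$ in the second row, which is never a standard Young tableau; the $N$ steps must index the \emph{first} row, as in the paper. The choice is not a free ``(say)'', and your proposed inverse inherits the same defect: recording an $N$ only for each second-row entry of $T\in\SYT(n,k-1)$ produces a path with $k-1$ north steps, so Equation~\eqref{eq:pref_path_bij} returns a preference list of length $k-1$ rather than $n$.

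Second, the shape bookkeeping has a genuine gap. The path has exactly $n$ north steps and $n$ east steps, so under either pairing each row naturally receives $n$ entries; the claim that $x_n\leq k$ ``caps how many north steps can appear'' and ``bounds the second row length by $k-1$'' is false, since the constraint changes no step counts. What $x_n\leq k$ actually forces is that the last $n-k+1$ steps of the path are all east steps, so in the resulting shape-$(n,n)$ tableau the entries $n+k,\dots,2n$ always occupy the final $n-k+1$ cells of the $E$-indexed row. The missing idea --- which is exactly how the paper finishes --- is to delete these forced entries to land in shape $(n,k-1)$, and to note that re-appending them to an arbitrary element of $\SYT(n,k-1)$ yields a valid shape-$(n,n)$ tableau whose path ends in $n-k+1$ east steps. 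Without this step you have no argument that the image consists of tableaux of shape exactly $(n,k-1)$ (as opposed to shapes $(n,j)$ with $j\leq k-1$), nor that every such tableau is attained.
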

\begin{proof}
    Let $\z$ be an element of $\mathcal{D}_n$. There is a bijection $\sigma_1: \NDPF_{n,n} \to \mathcal{D}_n$ such that if $\x \in \NDPF_{n,n}$, the $i$th North step in $\sigma_1(\x)$ is preceded by exactly $x_i - 1$ East steps. East steps are appended to the end of the Dyck path to ensure proper length. This is well defined because $x_i \leq i$, so the $i$th East step always comes after the $i$th North step. It's injective because two parking functions that map to the same Dyck path must be equal in every index, and it's surjective because reversing the process yields a valid parking function.
    
A bijection $\sigma_2:\mathcal{D}_n\to \SYT(n,n)$ is given in \cite[Page 1]{DyckPathsSYT} where 
for a Dyck path $\z$ of semilength $n$, they define the tableau $\sigma_2(\z)$ by putting $i$ in the first row if $z_i = N$ and in the second row if $z_i = E$.
    
    Restricting $\sigma_1$ to $\NDPF_{n,n}(x_n \leq k)$ yields a bijection to the set of Dyck paths of semilength $n$ that end with $n-k+1$ East steps. Restricting $\sigma_2$ yields a bijection to the set of Young tableaux of shape $(n,n)$ such that the numbers $\{n+k+1, \ldots, 2n\}$ all lie in the second row. Then because we know that the last $n-k+1$ entries of the second row are always the same, we can remove those entries to obtain a bijection.
\end{proof}

We are now ready to prove Theorem~\ref{thm:general}, which we recall states that the set of nondecreasing defective parking functions of defect $d$, denoted $\DPF_{m,n,d}^{\uparrow},$ and the set of standard Young tableaux of shape $(n+d,m-d)$ are in bijection. 
\begin{proof}[Proof of Theorem~\ref{thm:general}]
The result follows from the composition of the bijections in Corollary~\ref{cor:rational to classical} and Proposition~\ref{prop:SYT}.
\end{proof}

\section{Enumerative Results}\label{sec:enumerative}

We are now ready to establish a count for the number of $\Sym_n$-orbits of defective parking functions. Given the bijection in Theorem~\ref{thm:general}, it suffices to use  the hook length formula to establish the count of Corollary~\ref{cor:counting}.

\begin{proof}[Proof of Corollary~\ref{cor:counting}]
    From Corollary~\ref{cor:rational to classical}, there is a bijection \[\NDDPF_{m,n,d} \bij \NDPF_{n+d,n+d}(x_{n+d} \leq m+1-d).\]
    By Proposition~\ref{prop:SYT}, $\NDPF_{n+d,n+d}(x_{n+d} \leq m+1-d)$ is in bijection with standard Young tableaux of shape $(n+d,m-d)$. The hook length formula implies
    \[|\NDDPF_{m,n,d}| = \frac{n-m+2d+1}{n+d+1}\binom{m+n}{n+d}.\qedhere\]
\end{proof}

We conclude by providing a formula for the size of each orbit. 

\begin{theorem}\label{thm:size of orbit}
Given a nondecreasing defective parking function $\x \in \NDDPF_{m,n,d}$, let $\ell_i$ denote the number of elements of $\x$ equal to $i$. Then the size of the orbit of defective parking functions with representative $\x$ is
\[
\binom{m}{\ell_1,\ell_2,\ldots,\ell_{n}}=\frac{m!}{\ell_1!\ell_2!\cdots \ell_{n}!}.\] 
\end{theorem}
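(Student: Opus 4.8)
The plan is to show that the orbit of $\x$ under the $\Sym_m$-action consists precisely of the distinct rearrangements of the tuple $\x$, and then to count those rearrangements. First I would invoke the fact (established just before Theorem~\ref{thm:defect_characterization} and made explicit there) that defect depends only on the \emph{content} of a preference list, i.e. on the multiset of its entries, and not on the order in which they appear. This is exactly what guarantees that the entire orbit $\Sym_m \cdot \x$ lands inside $\DPF_{m,n,d}$: every permutation $\pi \cdot \x = (x_{\pi^{-1}(1)}, \ldots, x_{\pi^{-1}(m)})$ has the same multiset of entries as $\x$, hence the same defect $d$ by $\Sym_m$-invariance. So the orbit is well-defined and sits entirely in the fixed-defect set.

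Next I would identify the orbit with the set of distinct arrangements of the multiset of entries of $\x$. The group $\Sym_m$ acts transitively on the index positions, so $\pi \cdot \x$ ranges over all tuples obtained by reordering the entries of $\x$; two permutations $\pi, \pi'$ give the same tuple exactly when $\pi^{-1}$ and $(\pi')^{-1}$ agree up to a permutation that only swaps equal entries. The stabilizer of $\x$ is therefore the Young subgroup $\Sym_{\ell_1} \times \Sym_{\ell_2} \times \cdots \times \Sym_{\ell_n}$, where $\ell_i$ is the multiplicity of the value $i$ among the entries of $\x$ (noting that since $\x \in \NDDPF_{m,n,d}$ we have $\sum_{i=1}^{n} \ell_i = m$, as no entry equals $n+1$ in a nondecreasing representative unless it contributes, and in any case the multiplicities sum to $m$). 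By the orbit–stabilizer theorem, the orbit size is
\[
|\Sym_m \cdot \x| = \frac{|\Sym_m|}{|\mathrm{Stab}(\x)|} = \frac{m!}{\ell_1! \ell_2! \cdots \ell_n!} = \binom{m}{\ell_1, \ell_2, \ldots, \ell_n},
\]
which is precisely the claimed multinomial coefficient.

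The argument is essentially an application of orbit–stabilizer together with the content-invariance of defect, so I do not expect a serious obstacle. The one point requiring a little care is the bookkeeping of indices: the multiplicities $\ell_i$ are defined with $i$ ranging over $[n]$, yet entries of $\x$ may equal $n+1$, so I would either remark that an entry of value $n+1$ also contributes to the stabilizer factor and the multinomial should be read with the understanding that all multiplicities sum to $m$, or confirm that the statement as given treats the value $n+1$ consistently. The cleanest route is to phrase the stabilizer as the full Young subgroup associated to the multiset of \emph{all} entries (including any copies of $n+1$), observe its order is the product of the factorials of all multiplicities, and note this matches $\ell_1! \cdots \ell_n!$ under the convention that the remaining entries are accounted for. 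Everything else is standard, so the main step is simply verifying that distinct group elements modulo the stabilizer correspond bijectively to distinct tuples in the orbit.
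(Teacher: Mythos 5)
Your proposal is correct and follows essentially the same route as the paper, which simply observes that the orbit is the set of multiset permutations of $\x$ and is therefore counted by the multinomial coefficient; your orbit–stabilizer formulation is just the standard justification of that count, spelled out. Your side remark about entries equal to $n+1$ is a fair observation about the indexing in the statement, but it does not affect the validity of the argument.
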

\begin{proof}
The size of the orbit is equal to the number of multiset permutations of $\x$, which is given by the multinomial coefficient.
\end{proof}

Using our previous results, we establish Theorem~\ref{thm:new formula}, thereby giving a new formula for the number of defective parking functions. 
\begin{proof}[Proof of Theorem~\ref{thm:new formula}]
    The formula in equation \eqref{eq:full count}  follows directly from summing Theorem~\ref{thm:size of orbit} over $\NDDPF_{m,n,d}$.
\end{proof}

\section{Future Work}
We conclude with the following directions for future research. 

\begin{enumerate}
\item We welcome a proof of the conjectured formula for the defective Kreweras numbers as stated in Conjecture~\ref{conj:defective Kreweras}. If a counterexample is found, we then ask for a correct formula for these numbers.

\item Our proof of Theorem~\ref{thm:general} relied on a chain of bijective arguments. We wonder if a more direct bijection can be established. 

\item We studied one expansion of the Frobenius characteristic by introducing the defective Kreweras numbers,
but one could try to expand this symmetric function in other classical bases.
Of particular interest is the expansion in the power-sum basis, which records the number of defective parking functions fixed by a permutation of a particular cycle type.
The first step in this direction is likely a simpler formula than Theorem~\ref{thm:new formula}, which solves this question for the case of the identity permutation.

\item 
There have been numerous works generalizing parking functions and studying discrete statistics on these sets. We pose the following problems.

\begin{enumerate}
    \item Consider cars of varying lengths (such as parking assortments and sequences \cites{assortments1,sequences}). Give counts for the number of defective parking assortments and sequences.
    \item Consider the case where cars have an interval of parking preferences, as in \cite{ell_interval_pfs}). Give counts for the number of defective $\ell$-interval parking functions.
    \item Enumerate defective parking functions based on the number of lucky cars, those cars which park in their preference. When the defect is $d=0$, and $m=n$, there are known product formulas for their enumeration \cite{GesselSeo}. 
\end{enumerate}
\end{enumerate}

\section*{Acknowledgments}
The authors acknowledge funding support from National Science Foundation Grant No.\ DMS-1659138 and Sloan Grant No.\ G-2020-12592. P.~E.~Harris was partially supported through a Karen Uhlenbeck EDGE Fellowship.

\bibliographystyle{acm}
\bibliography{bibliography}

\begin{Contacts}
\AuthorAddress%
  {Rebecca E. Garcia}
  {Department of Mathematics and Computer Science, Colorado Springs, CO 80903 United States}
  {rgarcia2023@coloradocollege.edu}
\AuthorAddress%
  {Pamela E. Harris}
  {Department of Mathematical Sciences, University of Wisconsin-Milwaukee, Milwaukee, WI 53211 United States}
  {peharris@uwm.edu}
\AuthorAddress%
  {Alex Moon}
  {Department of Mathematical Sciences, University of Wisconsin-Milwaukee, Milwaukee, WI 53211 United States}
  {ajmoon@uwm.edu}
\AuthorAddress%
  {Aaron Ortiz}
  {Department of Mathematics, The University of Kansas, Lawrence, KS 66045 United States}
  {aortiz@ku.edu}
\AuthorAddress%
  {Lauren J. Quesada}
  {Department of Statistics, Colorado State University, Fort Collins, CO 80523-1877 United States}
  {Lauren.Quesada@colostate.edu}
\AuthorAddress%
  {Cynthia Marie Rivera S\'anchez}
  {Department of Mathematics, University of Puerto Rico - R\'io Piedras, San Juan PR, 00925-2537 United States}
  {cynthia.rivera15@upr.edu}
\AuthorAddress%
  {Dwight Anderson Williams II}
  {Department of Mathematics, Morgan State University, Baltimore, MD 21251 United States}
  {dwight@mathdwight.com}
\AuthorAddress%
  {Alexander N.~Wilson}
  {Department of Mathematics, Oberlin College, Oberlin, OH 44074}
  {math@alexandernwilson.com}
\end{Contacts}
\end{document}